\newcommand{\set}[1]{\,\left\{#1\right\}}
\newcommand{\setd}[2]{\,\left\{#1\ \colon\ #2\right\}}
\newtheorem{theorem}{Theorem}
\newtheorem{corollary}[theorem]{Corollary}
\newtheorem{lemma}[theorem]{Lemma}
\newtheorem{proposition}[theorem]{Proposition}
\newtheorem{definition}[theorem]{Definition}
\newtheorem{remark}[theorem]{Remark}
\newtheorem{conjecture}[theorem]{Conjecture}
\newcommand{\RR}{\mathbb{R}}
\newcommand{\ZZ}{\mathbb{Z}}
\author{Francesca Diana}
\author{Piotr W. Nowak}
\address{Fakult\"{a}t f\"{u}r Mathematik, Universit\"{a}t Regensburg, 93040 Regensburg, Germany}
\email{francesca.diana@mathematik.uni-regensburg.de}
\address{Instytut Matematyczny Polskiej Akademii Nauk, \'{S}niadeckich 8, Warsaw, Poland}
\address{Instytut Matematyki, Uniwersytet Warszawski, Banacha 2, Warsaw, Poland}
\email{pnowak@impan.gov.pl}
 \title[Higher large scale homology of products of trees]{Eilenberg swindles and  higher large scale homology of products of trees}
\begin{document}

\maketitle

\begin{abstract}
We show that uniformly finite homology of products of $n$ trees vanishes in all degrees except degree $n$, where it is infinite dimensional.
Our method is geometric and applies to several large scale homology theories, including 
almost equivariant homology and controlled coarse homology.
As an application we determine group homology with $\ell_{\infty}$-coefficients of lattices in products of trees.
We also show a characterization of amenability in terms of 1-homology
and construct aperiodic tilings using higher homology. 
\end{abstract}

\section{Introduction}
Uniformly finite homology is a coarse homology theory for non-compact metric spaces introduced by Block and Weinberger \cite{bw-jams}. 
It has several interesting applications, in particular, the vanishing of the uniformly finite homology in degree 0 
characterizes amenability  \cite{bw-jams}. 
This fact was further applied to construct aperiodic tiles and metrics of positive scalar curvature. Later, in \cite{whyte}, uniformly finite homology
was used to prove a geometric version of the von Neumann conjecture. It was also used to characterize 
those quasi-isometries that are close to bijections, see \cite{dymarz,whyte}.

While the vanishing in degree 0 is relatively well-understood, uniformly finite homology $H_n^{uf}$ in higher degrees $n\ge 1$, essentially remains uncharted territory. 
The only results known in this direction are discussed in \cite{bw-survey},
and include symmetric spaces, non-vanishing results for amenable groups based on the infinite transfer,
and recently in \cite{blank-diana}, where it was shown that higher uniformly finite homology of amenable groups is usually infinite-dimensional.

Our main result, motivated by the problem of computing higher large scale homology, 
is a geometric method for killing homology classes of products of trees or,
more generally, non-amenable graphs. 
We denote by $H_*^{(\infty)}$ the simplicial fine uniformly finite homology and by $H_*^{ae}$ Dranishnikov's almost equivariant homology.

\begin{theorem}\label{theorem : main1}
Let $\Gamma_i$, $i=1,\dots, n$, be a family of bounded degree non-amenable graphs and let $R=\ZZ,\RR$. 
Let $X=\Gamma_1\times\dots\times\Gamma_n$ be their (triangulated)
Cartesian product. Then 
$$
H_k^{(\infty)}(X;R)=H_k^{ae}(X;R)=0 \; \; \; \text{     for all $k\le n-1$}.
$$
\end{theorem}

The method we use also applies to the controlled coarse homology, introduced in \cite{nowak-spakula}, for which a quantitative statement holds. 
The above three homology theories have many applications in group theory, geometric topology and  index theory. They are often used to 
express largeness of manifolds, see \cite{dranishnikov,dfw,gong-yu,gromov,hanke-etal}.

It is worth noting that for any of the above homology theories the homological algebra behind the classical K\"{u}nneth theorem does not generalize naturally. Indeed, 
the chains, cycles and boundaries all form infinite-dimensional spaces. 
In such settings tensor products, naturally appearing in K\"{u}nneth-type theorems, exhibit fundamental difficulties. 
A K\"{u}nneth theorem for another coarse theory, Roe's coarse cohomology, is proved in \cite{hair}. However, adapting 
those thechniques to other large scale homology theories seems to be a demanding task, since the approach used in 
\cite{hair} is based on a special description of coarse chains in terms of modules having certain geometric properties. 

Our approach to proving Theorem \ref{theorem : main1} is geometric. The main ingredients are higher-dimensional Eilenberg swindles that we attach in different directions 
to a given cycle.
This strategy allows us to gradually reduce any cycle on the product $\Gamma_1\times\dots\times\Gamma_n$ 
to a cycle of a specific form, representing the same homology class.
The final step shows that the cycles of such specific form bound. 
The same method gives vanishing of $H_1^{(\infty)}(X\times Y,R)$, see Theorem \ref{theorem : vanishing for products of simplicial complexes}.

Combining Theorem \ref{theorem : main1} with the facts that the top-dimensional 
homology of a product of trees is infinite dimensional (see Proposition \ref{proposition: top dim homology of a tree}) 
and that products of 
trees are uniformly contractible we obtain

\begin{theorem}\label{theorem : uf homology of product of trees}
Let $T_i$ be uniformly locally finite infinite trees, in which each vertex has degree at least 3 and let $R=\ZZ,\RR$. Let
$T_1\times\dots\times T_n$ be their Cartesian product endowed with the maximum metric. Then 
$$H^{uf}_k(T_1\times \dots \times T_n;R)=\left\lbrace\begin{array}{ll}
0&k\neq n,\\
\text{infinite dimensional}& k=n.
\end{array} \right.$$
\end{theorem}

A result of similar flavor holds for an $n$-dimensional symmetric space of real rank $k$, where vanishing below the rank holds for the uniformly finite cohomology 
\cite[p. 558]{bw-survey}. 

Our results have several applications. By the quasi-isometry invariance of uniformly finite homology
we obtain the computation of the uniformly finite homology of an important class of groups. 
Let $\Gamma$ be a lattice in a product of trees. The class of such groups is extremely rich, see for example \cite{burger-mozes}.
Since uniformly finite homology is a quasi-isometry invariant, and it is isomorphic to group homology with coefficients
in $\ell_{\infty}$-spaces, as a corollary of Theorem \ref{theorem : uf homology of product of trees} we obtain the complete 
computation of group homology of such lattices.

\begin{theorem}\label{theorem : lattices in products of trees}
Let $\Gamma$ be a group acting properly cocompactly by isometries on a product of $n$ trees as before and let $R=\ZZ,\RR$. Then 
$$
H_k(\Gamma,\ell_{\infty}(\Gamma,R))\simeq  \left\{\begin{array}{ll} 0& \text{ if } k\neq n\\
\text{infinite dimensional }&\text { if }k=n. 
\end{array}\right.
$$
\end{theorem}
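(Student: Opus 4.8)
The plan is to combine the quasi-isometry invariance of uniformly finite homology with the identification between $H_*^{uf}$ and group homology with $\ell_\infty$-coefficients, reducing the statement to Theorems \ref{theorem : main1}, \ref{theorem : uf homology degree 2 product of trees} and the non-vanishing Proposition \ref{proposition: top dim homology of a tree} applied to the model space $T_1 \times \cdots \times T_k$. First I would recall the theorem, due essentially to Block and Weinberger and reformulated in this language in \cite{bw-jams}, that for a group $\Gamma$ acting properly cocompactly by isometries on a uniformly contractible space $Z$ of bounded geometry one has a natural isomorphism $H_i^{uf}(Z;R) \cong H_i(\Gamma; \ell_\infty(\Gamma,R))$; alternatively one invokes the Milnor--\v{S}varc lemma to see that $\Gamma$, with a word metric, is quasi-isometric to $Z$, and then uses that $H_*^{uf}$ is a quasi-isometry invariant (for $\Gamma$ a finitely generated group $H_*^{uf}(\Gamma;R) \cong H_*(\Gamma;\ell_\infty(\Gamma,R))$ by the standard argument comparing the bar resolution with the simplicial chains on the Rips complex). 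Thus it suffices to compute $H_i^{uf}(T_1 \times \cdots \times T_k ; R)$ for $k = 2, 3$.

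Next I would verify that the product $Z = T_1 \times \cdots \times T_k$ satisfies the hypotheses needed to apply the earlier results. Each $T_i$ is an infinite tree, hence a uniformly contractible locally finite simplicial complex, and since $\Gamma$ acts cocompactly each $T_i$ must have bounded geometry; after subdividing if necessary one may assume each vertex has degree at least $3$ (a tree on which a group acts cocompactly with infinite diameter has unbounded geometry unless... — in any case the relevant hypothesis is inherited from the action, or one passes to a quasi-isometric model). A free cocompact action forces each $T_i$ to be non-amenable when $k \geq 2$, but more directly: a product of two infinite trees is non-amenable as soon as one factor is, and here we may simply take the factors to be non-amenable, which holds precisely because each has a vertex of degree $\geq 3$ and is infinite. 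For $k = 2$, Theorem \ref{theorem : main1} gives $H_1^{uf}(T_1 \times T_2; R) = 0$; the degree-$0$ group $H_0^{uf} = 0$ by non-amenability of the product (Block--Weinberger), and $H_i^{uf} = 0$ for $i > 2 = \dim Z$ for dimension reasons; the remaining group $H_2^{uf}(T_1 \times T_2;R)$ is the top-dimensional one, shown infinite-dimensional by Proposition \ref{proposition: top dim homology of a tree}. For $k = 3$, Theorem \ref{theorem : uf homology degree 2 product of trees} handles all degrees at once.

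The main obstacle, as usual with these identifications, is matching up the homological algebra input with the geometric one. Concretely, the subtle point is the passage from $H_*^{uf}$ of the geometric model $Z$ to $H_*(\Gamma; \ell_\infty(\Gamma,R))$: one must check that the cocompact action produces a $\Gamma$-equivariant quasi-isometry $\Gamma \to Z$ and that this induces the claimed isomorphism on uniformly finite homology, which in turn requires knowing that $H_*^{uf}$ of a group coincides with group homology with $\ell_\infty$-coefficients. The latter is a known result (it appears in \cite{bw-jams} and is treated carefully in subsequent literature), obtained by comparing the complex of uniformly finite simplicial chains on the Rips complex of $\Gamma$ with the normalized bar complex computing $H_*(\Gamma; \ell_\infty(\Gamma,R))$; both are chain homotopy equivalent because $\ell_\infty(\Gamma,R)$ is a relatively injective (or at least a suitably large) $\Gamma$-module, so coinvariants compute the derived functor. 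The only genuine work, therefore, is bookkeeping: making sure the degree shifts, the coefficient ring $R \in \{\ZZ, \RR\}$, and the dimension of the model $Z$ (which is exactly $k$) line up so that the vanishing ranges and the single non-vanishing degree transfer correctly. I do not expect any new geometric idea to be needed beyond what Theorems \ref{theorem : main1} and \ref{theorem : uf homology degree 2 product of trees} already provide.
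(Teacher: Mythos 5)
Your proposal is correct and follows exactly the route the paper takes (the paper treats this theorem as an immediate corollary and does not spell out a separate proof): use the Milnor--\v{S}varc lemma to identify $\Gamma$ up to quasi-isometry with the model space $\prod T_i$, the quasi-isometry invariance of $H_*^{uf}$, the Block--Weinberger identification $H_*^{uf}(\Gamma;R)\cong H_*(\Gamma;\ell_\infty(\Gamma,R))$, and then Theorems~\ref{theorem : main1}, \ref{theorem : uf homology degree 2 product of trees} together with Proposition~\ref{proposition: top dim homology of a tree} to compute $H_*^{uf}$ of the product. Your bookkeeping for $k=2$ (degree $0$ by non-amenability of the product, degree $1$ by Theorem~\ref{theorem : main1}, degree $2$ by Proposition~\ref{proposition: top dim homology of a tree}, and degrees $>2$ for dimension reasons) and $k=3$ (all of Theorem~\ref{theorem : uf homology degree 2 product of trees}) is correct.

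One point you should state more carefully: the assertion that a proper cocompact action forces each factor $T_i$ to be non-amenable is false. For example $F_2\times\ZZ$ acts properly cocompactly on $T_4\times\RR$, and by Corollary~\ref{corollary: amenability} applied to the amenable group $\ZZ$ one has $H_1^{(\infty)}(T_4\times\RR;\RR)\neq 0$, so the conclusion of the theorem would fail in degree $1$. The theorem is meant to be read with the same standing hypothesis as Theorem~\ref{theorem : uf homology degree 2 product of trees}, namely that each tree has all vertices of degree at least $3$; that is what makes the factors non-amenable and lets you invoke Theorems~\ref{theorem : main1} and~\ref{theorem : uf homology degree 2 product of trees}. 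This is an implicit hypothesis in the paper's own statement as well, so your proof is on the same footing as theirs; just replace the incorrect ``the action forces non-amenability'' step with the explicit degree-$\ge 3$ hypothesis.
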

Note that there are examples of lattices in products of trees that
are cocompact and irreducible, i.e. they do not split into a product of lattices in the factors.

Another application is a characterization of amenable groups in terms of 1-homology.
\begin{corollary}\label{corollary: amenability}
Let $\Gamma$ be a finitely generated group and let $\operatorname{Cay}(\Gamma)$ denote its Cayley graph 
(with respect to some 
finite generating set). $\Gamma$ is amenable 
if and only if $H_1^{(\infty)}(\operatorname{Cay}(\Gamma) \times T;\RR)\neq 0$ for any uniformly locally finite infinite tree $T$.
\end{corollary}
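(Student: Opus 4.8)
The plan is to prove the two implications separately: the non-amenable direction by the paper's vanishing machinery, and the amenable direction by a Künneth-type lower bound that must be built by hand, since no Künneth theorem is available for $H_*^{(\infty)}$.

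\emph{Assume $G$ is not amenable.} I prove the contrapositive of ``$\Leftarrow$'' by producing one infinite tree $T$ with $H_1^{(\infty)}(\Gamma\times T;\RR)=0$. Take $T$ to be an infinite tree all of whose vertices have degree at least $3$; then $T$ is uniformly contractible (as every tree is) and non-amenable, while $\Gamma$ is a bounded-geometry non-amenable complex. Applying the fine version of Theorem~\ref{theorem : main1} to the pair $(\Gamma,T)$ yields $H_1^{(\infty)}(\Gamma\times T;\RR)=0$. One caveat: $\Gamma$, being a Cayley graph with essential loops, need not be uniformly contractible; but in the degree-$1$ case the Eilenberg swindle is performed in the tree factor, and the hypothesis on the second factor enters only through vanishing of its fundamental class, i.e. through non-amenability of $\Gamma$. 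Consequently ``$H_1^{(\infty)}(\Gamma\times T;\RR)\neq0$ for every infinite tree $T$'' fails.

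\emph{Assume $G$ is amenable.} Then $\Gamma$ is amenable, so by the criterion of Block and Weinberger \cite{bw-jams} the fundamental class $[\Gamma]$ is nonzero in $H_0^{(\infty)}(\Gamma;\RR)$; it is detected by the functional $\mu_F$ that averages the coefficients of a $0$-chain over a Følner sequence $(F_N)$ of $\Gamma$ (along a non-principal ultrafilter), with $\mu_F([\Gamma])=1$. Given an infinite tree $T$ with at least two ends — in particular any non-amenable $T$ — pick a bi-infinite geodesic $L\subset T$ and let $\omega=\sum_{e\in E(L)}e$ be its fundamental $1$-cycle, a nonzero class of $H_1^{(\infty)}(T;\RR)$ since $T$ has no $2$-cells. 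Form the chain-level cross product $[\Gamma]\times\omega\in H_1^{(\infty)}(\Gamma\times T;\RR)$, supported on the edges $\{x\}\times e$ with $x\in V(\Gamma)$ and $e\in E(L)$. To see it is nonzero, fix $e_0\in E(L)$; its removal splits $T$ as $T^-\sqcup T^+$, and the edges of $\Gamma\times T$ running from $\Gamma\times T^-$ to $\Gamma\times T^+$ form a ``cut'' $\mathcal C$. Let $\Phi$ be the $\mu_F$-average of the net flux of a $1$-chain across $\mathcal C$. Inspecting the two triangles of each square $e'\times e_0$ shows that every $2$-simplex $\Delta$ either is disjoint from $\mathcal C$ or has $\partial\Delta$ crossing $\mathcal C$ once in each direction; either way $\partial\Delta$ has zero net flux across $\mathcal C$, so $\Phi$ vanishes on boundaries up to a correction of size $O(|\partial F_N|)$ that dies under the Følner average. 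Thus $\Phi$ descends to $H_1^{(\infty)}(\Gamma\times T;\RR)$, and $\Phi([\Gamma]\times\omega)=\mu_F([\Gamma])=1\neq0$. If instead $T$ is one-ended, hence amenable, I interchange the two factors: now $[T]\neq0$ in $H_0^{(\infty)}(T;\RR)$, while $\Gamma$ — an infinite vertex-transitive locally finite graph — contains a bi-infinite geodesic $L_\Gamma$, and the same argument applied to $\omega_{L_\Gamma}\times[T]$ gives $H_1^{(\infty)}(\Gamma\times T;\RR)\neq0$.

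I expect the amenable direction to be the main obstacle. Lacking a Künneth theorem, one cannot read off nonvanishing of $[\Gamma]\times\omega$ formally, but must exhibit the detecting functional $\Phi$ and check that it passes to homology; the decisive input is the Følner structure of $\Gamma$ — exactly as in Block and Weinberger's degree-$0$ amenability criterion — together with the geometric fact that a single $2$-simplex meets the cut $\mathcal C$ in a balanced way, so that fillings cannot transport net flux across $\mathcal C$.
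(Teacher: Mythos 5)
Your proposal is correct, and while the two directions of the equivalence appear in both arguments, the amenable direction is carried out by a genuinely different mechanism than in the paper, so a comparison is worthwhile.

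For the non-amenable direction you rightly point out that $\Gamma$ need not be uniformly contractible and therefore Theorem~\ref{theorem : main1} cannot be invoked literally; the correct reference is the fine version, Theorem~\ref{theorem : fine H_1 of product }, which requires only non-amenability of both factors. The paper simply cites Theorem~\ref{theorem : main1}, so your remark is an honest improvement. (Implicit in both your proof and the paper's is that $T$ is a non-amenable tree; the corollary fails as literally stated for $T=\ZZ$.)

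For the amenable direction, the paper and you take the dual of each other's strategy. The paper applies the swindle in the $T$-factor (Lemmas~\ref{l:hor+ver} and~\ref{t:ver}, using non-amenability of $T$) to reduce an arbitrary $1$-cycle to one supported on vertical edges, then averages the coefficients over $\Gamma$ using an invariant mean on $G$, obtaining a splitting of the ``copying'' map $i:H_1^{(\infty)}(T;\RR)\to H_1^{(\infty)}(\Gamma\times T;\RR)$, and concludes that the infinite-dimensional $H_1^{(\infty)}(T;\RR)$ injects. You instead construct a single detecting linear functional $\Phi$ directly on $C_1^{(\infty)}(\Gamma\times T;\RR)$ (flux across a fiberwise cut in $T$, averaged along a F\o lner sequence of $\Gamma$) and verify that it annihilates boundaries and is nonzero on $[\Gamma]\times\omega$. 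Your construction does not need the swindle or non-amenability of $T$; a bi-infinite geodesic suffices, and it handles the residual $O(|\partial F_N|)$ error from $2$-simplices straddling the F\o lner boundary explicitly, something the paper glosses over in the sentence ``it follows that \dots injects.'' Unwinding the paper's averaging argument in fact produces essentially your flux estimate, so the two are ultimately the same computation packaged once as a map and once as a functional. What the paper's packaging buys is the stronger conclusion that $H_1^{(\infty)}(\Gamma\times T;\RR)$ is infinite-dimensional (since all of $H_1^{(\infty)}(T;\RR)$ injects), whereas your functional only detects one class at a time; on the other hand, your approach works for a strictly wider class of trees in the amenable direction (anything with a bi-infinite geodesic, plus the one-ended case by swapping the roles of $\Gamma$ and $T$), and spells out where the F\o lner condition is actually used.

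One small caution: you should be explicit about how the ``net flux of a $1$-chain across $\mathcal C$'' is defined for infinitely supported chains, since $\mathcal C$ is infinite — the point is precisely to take the F\o lner average of the \emph{local} per-fiber flux $\text{flux}_x(c)$, where each crossing edge is assigned to the fiber of its smaller $\Gamma$-coordinate, and then the once-in/once-out property of $\partial\Delta$ translates into cancellation up to fibers of $\Delta$ that straddle $\partial F_N$. This is what you mean, but it deserves a line of definition.
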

We also show a construction of aperiodic tiles using Dranishnikov's almost equivariant homology, 
as well as discuss some questions and conjectures.

\subsection*{Acknowledgements}
This work was done during the first author's stay at IMPAN in Warsaw. We would like to thank Universit\"{a}t Regensburg and especially Clara L\"{o}h for making this
collaboration possible. The second author was partially supported by the Foundation for Polish Science. 

We would like to thank the referee for carefully reading the text and suggesting several improvements.

\setcounter{tocdepth}{1}
\tableofcontents

\section{Large scale homology}\label{s:uf}
\subsection{Uniformly finite homology}
Uniformly finite homology was introduced by Block and Weinberger \cite{bw-jams}.
Let $X$ be a uniformly locally finite simplicial complex equipped with a geodesic metric such that its restriction to any simplex gives 
the regular simplex in the Euclidean space with edges of length 1.
All  the complexes to which our arguments will be applied will be finite-dimensional. 

Let $R$ be a normed abelian group and 
define the \emph{fine uniformly finite homology} with coefficient in $R$ as follows.
The chains $C_n^{(\infty)}(X;R)$ are linear combinations
$$c=\sum_{\sigma\in \Delta_n} c(\sigma)\cdot\sigma,$$
where $\Delta_n=\Delta_n(X)$ is the collection of all $n$-simplices in $X$ and $c(\sigma)\in R$ for every $\sigma\in \Delta_n$, satisfying
$$\Vert c\Vert_{\infty}=\sup_{\sigma\in \Delta_n}\vert c(\sigma)\vert_R <\infty.$$
Together with the standard combinatorial boundary operator the $C_n^{(\infty)}(X;R)$ form a chain complex, 
whose homology is the \emph{(simplicial) fine uniformly finite homology theory} $H_n^{(\infty)}(X;R)$.

Now let $X$ be a locally finite discrete metric space. For $d\ge 0$ the \emph{Rips complex} $P_d(X)$ is the simplicial complex 
defined as follows. The vertices of $P_d(X)$ are the elements of $X$; $n+1$ vertices $x_0,\dots, x_n$ span an $n$-simplex if~$d(x_i,x_j)\le d$ for all $i,j\in\set{0,\dots n}$.

For a metric space $X$ a \emph{net} is a subset $\Gamma\subseteq X$ such that there is $C>0$ such that for every $x\in X$ there exists $\gamma\in \Gamma$ with $d(\gamma,x)\le C$.
A (discrete) metric space $X$ has bounded geometry if for every $r>0$ there exists $N(r)>0$ such that the cardinality of any ball of radius $r$ in $X$ is at most $N(r)$.
See \cite{nowak-yu}.
Given a metric space $X$ containing a net 
$\Gamma\subseteq X$ of bounded geometry (i.e. a \emph{metric space of bounded geometry}) the uniformly finite homology of $X$ is 
the group
$$H_*^{uf}(X;R)=\underrightarrow{\lim}_d\ \ H_*^{(\infty)}(P_d(\Gamma);R).$$
In the case of a uniformly locally finite simplicial complex $X$, this defines a natural \emph{coarsening homomorphism}
\begin{equation}\label{eq:comparison}
c_*:H_*^{(\infty)}(X;R)\to H_*^{uf}(X;R),
\end{equation}
induced by a natural map $c:X\to P_r(\Gamma)$ for some appropriately chosen sufficiently large $r>0$ (in this case the net $\Gamma$ can be taken to be the vertex set of~$X$).
Recall that $X$ is uniformly contractible if for every $r>0$ there exists~$S_{r}>0$ such that for every $x\in X$ the ball $B(x,r)$ is contractible inside~$B(x,S_r)$.
If $X$ is uniformly contractible then $c_*$ is an isomorphism \cite{roe-memoir,higson-roe}.

An important property of  $H_*^{uf}$ is that it is invariant under quasi-isometries \cite{bw-jams}:
if metric spaces $X$ and $Y$ are quasi-isometric then $H_*^{uf}(X;R)\cong H_*^{uf}(Y;R)$.

\subsection{Other coarse homology theories}
We briefly explain how to modify the above definition to obtain other homology theories that are important in large scale geometry.

\subsubsection{Controlled coarse homology}
If we consider chains, whose growth is bounded by a multiple of a non-decreasing function $f:X\to \RR$, in the sense that 
$$\vert c(\sigma)\vert \le Cf(d(\sigma,x_0)),$$ 
where $C>0$ depends on $c$, $x_0$ is a fixed vertex and $d$ is the metric on $X$, 
then we obtain the controlled coarse homology,
$H_*^f(X)$, introduced in \cite{nowak-spakula}. This homology can be used to quantify amenability and thus has several applications through the relation 
with isoperimetric inequalities on groups. The uniformly finite homology is then the controlled coarse homology with control function $f\cong 1$.

\subsubsection{Dranishnikov's almost equivariant homology} \label{subsection: almost equivariant homology}
If in the above chain complex, instead of bounded chains we consider only those chains that take finitely many values, 
in the sense that for each such chain $c$ the set
$$\setd{ c(\sigma)}{\sigma \in \Delta_n}$$
is finite,
we will obtain 
the \emph{almost equivariant homology} $H_*^{ae}(X)$, introduced by Dranishnikov \cite{dranishnikov} (in Dranishnikov's work this homology is considered only for a group). 
In  our context
it will be useful for constructing aperiodic tiles, see Section \ref{section : aperiodic tiles}.

\subsection{Eilenberg swindles in degree 0}\label{s:es}
Let $X$ be a uniformly locally finite simplicial complex and let $R=\ZZ,\RR$. 
The \emph{fundamental class} of $X$ in the fine uniformly finite homology is the class $[X]\in H_0^{(\infty)}(X;R)$ represented by the $0$-cycle $$\sum_{x\in V_X}x,$$ 
which assigns the coefficient $1$ to any vertex $x\in V_X$. 
\begin{definition}
A metric space of bounded geometry is amenable if it admits a net $\Gamma\subseteq X$ with the following property:
for every $r,\epsilon>0$ there exists a finite subset $U\subseteq \Gamma$ such that $\vert \partial_r{U}\vert <\epsilon \vert U\vert$,
where $\partial_r {U}:=\{x\in \Gamma \; \big | \; 0<d(x, U)<r\}$.
\end{definition}
The following was proved by Block and Weinberger.
\begin{theorem}[\cite{bw-jams}]\label{t:BW}
Let $X$ be a metric space of bounded geometry and let $\Gamma\subseteq X$ be a net in $X$. The following are equivalent:
\begin{enumerate}
\item $X$ is non-amenable,
\item $H_0^{uf}(X;R)=0$ for $R=\ZZ,\RR$,
\item $[\Gamma]=0$ in $H_0^{uf}(X;R)$ for $R=\ZZ,\RR$.
\end{enumerate}
\end{theorem}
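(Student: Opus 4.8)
The plan is to reduce to the discrete case and then establish the two substantial implications $(3)\Rightarrow(1)$ and $(1)\Rightarrow(2)$; the implication $(2)\Rightarrow(3)$ is immediate since $[\Gamma]$ is a particular class in the group $H_0^{uf}(X;R)$. Since the inclusion of a bounded geometry net $\Gamma\hookrightarrow X$ is a quasi-isometry and $H_*^{uf}$ is a quasi-isometry invariant, I would assume throughout that $X=\Gamma$ is a discrete space of bounded geometry and compute inside the Rips complexes $P_d(X)$, using that a class of $H_0^{uf}=\varinjlim_d H_0^{(\infty)}(P_d(X))$ vanishes precisely when a representing cycle bounds in $C_1^{(\infty)}(P_{d'}(X);R)$ for some scale $d'$.

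For $(3)\Rightarrow(1)$ I argue by contraposition: assuming $X$ amenable, I produce a bounded linear functional on $H_0^{uf}(X;\RR)$ not vanishing on $[\Gamma]$. Fix a Følner sequence $(U_n)$ in $X$ and let $\mu\in\bigl(\ell_\infty(X;\RR)\bigr)^{\ast}=\bigl(C_0^{(\infty)}(P_d(X);\RR)\bigr)^{\ast}$ be a weak-$\ast$ cluster point of the normalized indicators $\tfrac{1}{|U_n|}\mathbbm{1}_{U_n}$. The key observation is that, for any bounded $1$-chain $c$ on $P_d(X)$, in the pairing $\bigl\langle \tfrac{1}{|U_n|}\mathbbm{1}_{U_n},\ \partial_1 c\bigr\rangle$ only the edges of $P_d(X)$ joining $U_n$ to its complement contribute; by bounded geometry their number is $O(|\partial_d U_n|)$, so the pairing is $O\bigl(\Vert c\Vert_\infty\,|\partial_d U_n|/|U_n|\bigr)\to 0$. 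Thus $\mu$ annihilates $\im\partial_1$ and descends to $H_0^{uf}(X;\RR)$, while $\mu([\Gamma])=\lim_n \tfrac{1}{|U_n|}\sum_{x\in U_n}1=1$. Hence $[\Gamma]\neq 0$ over $\RR$, and, applying the change-of-coefficients homomorphism $H_0^{uf}(X;\ZZ)\to H_0^{uf}(X;\RR)$ which carries the integral fundamental class to the real one, also $[\Gamma]\neq 0$ over $\ZZ$; so $(3)$ fails, and likewise $(2)$.

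For $(1)\Rightarrow(2)$: the $0$-th boundary map is zero, so every $0$-chain is a cycle and the task is to show that an arbitrary $c\in C_0^{(\infty)}(P_d(X);R)$ bounds in $C_1^{(\infty)}(P_{d'}(X);R)$ for some $d'\ge d$. The crux is a geometric lemma: quantitative non-amenability — $|\partial_{r_0}U|\ge\epsilon_0|U|$ for all finite $U$ — yields, at a sufficiently large scale $d'$, a uniformly bounded $1$-chain $\gamma\in C_1^{(\infty)}(P_{d'}(X);\ZZ)$ with $\partial_1\gamma=\mathbbm{1}_X$; morally, $\gamma$ is a flow of bounded capacity draining one unit of mass out of every vertex to infinity. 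I would obtain $\gamma$ by a max-flow/min-cut (equivalently Hall marriage) argument on $P_{d'}(X)$, where the linear lower bound on the sizes of cuts is exactly what forces the flow to have bounded capacity. From $\gamma$ one passes to a general bounded $0$-chain by routing the variable mass along $\gamma$ with pointwise bookkeeping (an Eilenberg-swindle-type argument), and from $\ZZ$- to $\RR$-coefficients by approximation and rescaling; this gives $H_0^{uf}(X;R)=0$.

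Assembling, $(2)\Rightarrow(3)$ is trivial, $(3)\Rightarrow(1)$ is the Følner functional, and $(1)\Rightarrow(2)$ is the bounded flow, closing the cycle of implications. The only genuine obstacle is the combinatorial flow lemma in $(1)\Rightarrow(2)$: constructing a uniformly bounded $1$-chain whose boundary is the fundamental class requires an infinite max-flow argument and is the single place where quantitative non-amenability is really used; the remaining steps are soft functional analysis and formal reductions.
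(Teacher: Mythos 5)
The paper does not prove Theorem~\ref{t:BW}; it cites \cite{bw-jams,nowak-yu} and only reproduces the tail decomposition in Section~\ref{s:es}, so there is no in-paper proof to match against. Your proposal correctly reproduces the standard Block--Weinberger argument: the three implications, the F{\o}lner-mean pairing for $(3)\Rightarrow(1)$, and the bounded flow together with an Eilenberg swindle for $(1)\Rightarrow(2)$. Two precision points are worth spelling out. In $(3)\Rightarrow(1)$, the definition of amenability lets the F{\o}lner sets $U_n$ depend on the scale $r$; since $H_0^{uf}$ is a direct limit over $d$, you should construct a functional $\mu_d$ on $C_0^{(\infty)}(P_d(X);\RR)$ for each $d$ separately, built from a F{\o}lner sequence adapted to $r=d$, and then observe that $[\Gamma]$ stays nonzero at every stage of the limit (a single cluster point $\mu$ need not annihilate $\im\partial_1$ at all Rips scales simultaneously). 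In $(1)\Rightarrow(2)$, passing from the single bounded $1$-chain $\gamma$ with $\partial\gamma=\sum_x x$ to bounding an arbitrary bounded $c$ is not mere ``pointwise bookkeeping'': you must first decompose $\gamma$ into a family of tails $\{t_x\}_{x\in X}$ with $\partial t_x = x$ and uniformly bounded overlap on each edge --- this is the path decomposition of \cite[Lemma~2.4]{bw-jams} that the paper sketches in Section~\ref{s:es} --- and only then is $\sum_x c(x)\,t_x$ a bounded primitive of $c$. You rightly locate the genuine obstacle in the combinatorial flow lemma, but the path decomposition of the resulting infinite integral flow is an essential part of it rather than an afterthought.
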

For the proof we refer to \cite{bw-jams,nowak-yu}. We consider now $X$ to be a uniformly locally finite simplicial complex. Endowed
with a metric as before, $X$ is a metric space of bounded geometry (we can take its vertex set $V_X\subset X$ as a net).
Suppose that $[X]=0$ in $H_0^{uf}(X;\ZZ)$; i.e., that there exists a $1$-cycle $\psi\in C_1^{(\infty)}(X;\ZZ)$ whose boundary is $\sum_{x\in X}x$. It is possible to decompose $\psi$
as an (infinite) sum of 1-chains of a special form. We now describe this decomposition as it will be the main tool in our further considerations.

For any vertex $x\in V_X$ consider a sequence $\{x_{k}\}_{k\in\ZZ_{\leq 0}}$ of pairwise distinct points such that for any $k\in\ZZ_{\leq 0}$ we have $[x_{k-1},x_{k}]\in \Delta_1(X)$ and~\text{$x_0=x$}. Now define
\begin{equation}\label{eq : tail}
 t_{x}=\sum_{k\in\ZZ_{\leq 0}}[x_{k-1},x_{k}].
\end{equation}
Clearly, $t_{x}\in C_{1}^{(\infty)}(X;\ZZ)$ for any $x\in X$. Moreover,
\[
 \partial{t_x}=x.
\]
We call $t_{x}$ a \emph{tail attached to $x$}. Now for any vertex $x\in V_X$ consider a tail $t_{x}$ constructed as above and consider 
\[
 \sum_{x\in V_X}t_{x}.
\]
This is an infinite sum of simplices in $ \Delta_1(X)$. For any $1$-simplex $\sigma\in  \Delta_1(X)$, define
\begin{equation}\label{eq:simplices_in_tail}
 E(\sigma):=\left\{x\in V_X\;\big|\; t_{x} \; \; \text{passes through} \; \; \sigma \right\}.
\end{equation}
Clearly, every $1$-simplex $\sigma\in \Delta_1(X)$ appears in $\sum_{x\in V_X}t_{x}$ with coefficient equal to 
the cardinality of $E(\sigma)$.
This number might be unbounded. However one can construct tails $t_{x}$ using only simplices appearing in 
$\psi\in C_{1}^{(\infty)}(X;\ZZ)$ (see the proof of Lemma 2.4~\cite{bw-jams} for more details). In this way, for any 
simplex~\text{$\sigma\in \Delta_1(X)$} there is a uniformly bounded number of tails passing through it. In particular, in this situation
\[
 \sum_{x\in V_X}t_{x}\in C_{1}^{(\infty)}(X;\ZZ)
\]
and $\partial\sum_{x\in V_X}t_{x}=\sum_{x\in V_X}x$. This construction of tails of $1$-simplices 
attached to points is an instance of an \emph{Eilenberg swindle}, allowing to push 
the homological information off to infinity.  It follows from Theorem \ref{t:BW} that the above 
Eilenberg swindles construction is possible if and only if $X$ is non-amenable.

\subsection{Relative homology}\label{subsection : relative homology}
Let $X$ be a uniformly locally finite simplicial complex, $A$ be a subcomplex of $X$ and  let $R=\ZZ,\RR$.
The natural inclusion $A\subseteq X$ induces a short exact sequence of chain complexes,
\begin{diagram}
0 & \rTo& C_k^{(\infty)}(A;R)&\rTo  &C_k^{(\infty)}(X;R)&\rTo& C_k^{(\infty)}(X,A;R) &\rTo & 0,
\end{diagram}
where as usual, 
$$C_k^{(\infty)}(X,A;R)=C_k^{(\infty)}(X;R) \big\slash C_k^{(\infty)}(A;R)$$
denotes the relative chains.
We get a standard long exact sequence of a pair:
\begin{equation}\label{equation: exact sequence relative homology}
\dots\to H_k^{(\infty)}(A;R) \to H_k^{(\infty)}(X;R) \to H_k^{(\infty)}(X,A;R) \to H_{k-1}^{(\infty)}(A;R) \to \dots.
\end{equation}
As usual, classes in  $H_k^{(\infty)}(X,A;R)$ are represented by $n$-chains $c \in C_n^{(\infty)}(X;R)$, satisfying $\partial c \in C_{n-1}^{(\infty)}(A;R)$.
Such a relative cycle $ c$ bounds in $H_n^{(\infty)}(X, A;R)$ if and only if 
$$ c= \partial b +  a,$$ 
for some $ b\in C_{n+1}^{(\infty)}(X;R)$ 
and $ a \in C_n^{(\infty)}(A;R)$.

Consider now a product of $n$ simplicial complexes $X=X_1\times \dots \times X_n$.
We assume for now that $X$ is equipped with a simplicial structure and by a $k$-\emph{cube} we will mean a subcomplex which is a product of $k$ edges $e_i\in \Delta_1(X_i)$ 
and~$n-k$ vertices in $X_i$.
We additionally assume that the simplicial structure on the product is such that each $k$-cube with the induced simplicial structure is one of finitely many simplicial structures on 
a cube $[0,1]^k$, see e.g. \cite{eilenberg}.

By a boundary of a $k$-cube we denote the subcomplex given by the union of the $2k$ $(k-1)$-cubes forming its topological boundary.

\begin{proposition}\label{p:relative}
Let $Y$ be the union of a collection of $k$-cubes in $X$ and let~$A$ be the union of the boundaries of the $k$-cubes in $Y$.
Then $H_i^{(\infty)}(Y,A;R)=0$ \text{for $i\le k-1$} and $R=\ZZ,\RR$.
\end{proposition}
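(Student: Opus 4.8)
\emph{Proof plan.} The plan is to exhibit $C_*^{(\infty)}(Y,A;R)$ as an $\ell_\infty$-product of the (finite) relative chain complexes of the individual $k$-cubes of $Y$, and then to build a bounded contracting homotopy in degrees $\le k-1$ by gluing together uniformly controlled contractions of the finitely many combinatorial types of cube that occur.

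First I would pin down the relative chains. Every $n$-simplex of $Y$ lies in some $k$-cube $C$, and if it is not contained in $A$ then its relative interior lies in the open cube $\mathrm{int}\,C$; since distinct open cubes are disjoint, such a simplex belongs to a unique $C$. Hence $\Delta_n(Y)\setminus\Delta_n(A)=\bigsqcup_C\bigl(\Delta_n(C)\setminus\Delta_n(\partial C)\bigr)$, so a bounded relative $n$-chain is precisely a uniformly bounded family $(c_C)_C$ with $c_C\in C_n(C,\partial C;R)$. The combinatorial boundary of $c_C$ is supported on simplices of $C$, and those on $\partial C$ die modulo $A$, so the boundary operator of $C_*^{(\infty)}(Y,A;R)$ is exactly the product of the relative boundary operators of the pairs $(C,\partial C)$; that is, $C_*^{(\infty)}(Y,A;R)\cong\prod_C C_*(C,\partial C;R)$ with the supremum norm.

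Next, each $(C,\partial C)$ is combinatorially a triangulation of $([0,1]^k,\partial[0,1]^k)$, so from the long exact sequence of the pair and contractibility of $C$ we get $H_i(C,\partial C;R)\cong\widetilde H_{i-1}(S^{k-1};R)=0$ for all $i\le k-1$, over $R=\ZZ$ and $R=\RR$. By the standing assumption there are only finitely many simplicial structures on the cubes, hence only finitely many isomorphism types of complex $C_*(C,\partial C;R)$. For each type I would construct a partial contracting homotopy $s_i\colon C_i(C,\partial C;R)\to C_{i+1}(C,\partial C;R)$, $0\le i\le k-1$, with $\partial s_i+s_{i-1}\partial=\id$ in these degrees, by the usual upward induction: split $\partial_1$ (it is onto because $H_0=0$, and the chain groups are free), and at each later step lift $\id-s_i\partial$ — which takes values in $\ker\partial_{i+1}=\im\partial_{i+2}$ since $H_{i+1}=0$ — through the surjection $\partial_{i+2}$, using freeness of the source. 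Over $\RR$ this is automatic. In either case each $s_i$ is a bounded operator for the sup-norm since the complexes are finitely generated, and taking the maximum over the finitely many types yields a uniform bound $K$.

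Finally I would assemble. Given a relative cycle $c\in C_i^{(\infty)}(Y,A;R)$ with $i\le k-1$, replace it by the representative supported off $A$ and write it as $(c_C)_C$; the cycle condition says each $c_C$ is a relative cycle of $(C,\partial C)$. Set $b_C:=s_i(c_C)$ and let $b\in C_{i+1}^{(\infty)}(Y;R)$ be the chain that restricts to $b_C$ on $\mathrm{int}\,C$ and vanishes on $A$; then $\lVert b\rVert_\infty\le K\lVert c\rVert_\infty<\infty$, and $\partial_C b_C=c_C$ for every $C$, so $c-\partial b$ is supported on $\Delta_i(A)$. Since $\partial$ is a bounded operator on $C_*^{(\infty)}(Y;R)$ by uniform local finiteness, $a:=c-\partial b$ lies in $C_i^{(\infty)}(A;R)$, and $c=\partial b+a$ exhibits $c$ as bounding; hence $H_i^{(\infty)}(Y,A;R)=0$ for $i\le k-1$. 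The one delicate point is the integral case of the partial contraction: one must check that the relevant images are direct summands and that the liftings can be performed over $\ZZ$, which is exactly where freeness of the simplicial chain groups together with the vanishing (not merely finiteness) of the low-degree homology is used. Everything else is bookkeeping with the $\ell_\infty$-product.
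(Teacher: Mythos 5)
Your proof is correct, and it takes a route that is related to but cleaner than the paper's. Both arguments rest on the same structural observation: a relative $i$-chain decomposes, via the disjointness of the open $k$-cubes, into a uniformly bounded family of relative $i$-chains $(c_C)_C$ in the finite complexes $C_*(C,\partial C;R)$, with the relative boundary acting componentwise, and the vanishing $H_i(C,\partial C;R)=0$ for $i\le k-1$. Where you diverge is in how boundedness of the bounding chain is controlled. The paper chooses, for each cycle $c_I$, some $b_I$ with $\partial b_I\equiv c_I$ modulo $\partial I^k$, and then argues boundedness separately over the two coefficient rings: over $\ZZ$ by observing that a bounded integral $c_I$ on one of finitely many cube types has only finitely many possibilities, so one can fix a $b_I$ for each; over $\RR$ by a closed-range/bounded-inverse argument on the finite-dimensional chain spaces. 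You instead build, for each of the finitely many isomorphism types of $(C,\partial C)$, a partial chain contraction $s_i\colon C_i(C,\partial C;R)\to C_{i+1}(C,\partial C;R)$ in degrees $\le k-1$ (the inductive lifting you describe is sound: at each step the obstruction lands in a cycle group that equals a boundary group by the vanishing of the relevant low-degree relative homology, and the lift exists because the relative chain modules are free, hence projective, over $\ZZ$). Each $s_i$ is a linear map between finitely generated modules, hence bounded for the sup-norm, and taking the maximum over the finitely many types yields a uniform constant. This gives a single, coefficient-agnostic argument in place of the paper's two separate ones, at the cost of being slightly more algebraic; the paper's $\ZZ$-argument is more elementary, and its $\RR$-argument is essentially the linear-algebraic shadow of your contraction without constructing it explicitly. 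One small point worth making explicit if you write this up: the identification $C_*^{(\infty)}(Y,A;R)\cong\prod_C C_*(C,\partial C;R)$ should be stated as an isomorphism of normed chain complexes, with the quotient sup-norm on the left matching the $\ell_\infty$-product norm on the right, since that is what lets you transport bounded families back and forth.
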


\begin{proof}
Let $ c$ be a relative cycle; that is $\partial  c\in C_i^{(\infty)}(A;R)$, $i\le k-1$.
Consider~$c_I$, the restriction of $c$ to a $k$-cube $I=I^k\subseteq X$. Denote by $\partial I^k$ the simplicial boundary of $I^k$.
For such a cube the standard simplicial homology satisfies
$$H_i(I^k,\partial I^k)\simeq H_i(I^k/\partial I^k)\simeq H_i(S^k),$$
since the boundary $\partial I^k$ is a deformation retract of its neighborhood in $I^k$.
Therefore, 
$$H_i(I^k,\partial I^k)=0,$$
provided $i\le k-1$. 

Now, $ c_I$ is a relative cycle in $H_i(I^k,\partial I^k)$, and as such, vanishes. That is,
$$ c_I=\partial  b_I+ a_I,$$
where $ b_I\in C_{i+1}(I^k)$ and $ a_I\in C_i(\partial I^k)$.  Define 
$$ b=\sum_I  \sum_{\sigma\in \Delta_{i+1}(I^k)} b_I(\sigma)\sigma,$$
where $I^k$ runs through all the cubes of dimension $k$ in $Y$.
Then $ c - \partial  b$ is supported on $A=\sum_{I^k} \partial I^k$, where again $I^k$ runs through all the cubes of dimension~$k$ in $Y$.

It remains to show that both $ b$ and $ a$ are bounded. Observe that in the case of  $R=\mathbb{Z}$, the boundedness of $ c$ and the assumptions on the simplicial structure 
imply
that $ c_I$ is one of finitely many possible chains in $C_i(I^k)$. Then there are finitely many possibilities for $ b_I$. 
Consequently, the coefficient of $b$ and $ c-\partial b$ attain only finitely
many possible values, and, in particular, both $b$ and $c-\partial b$  are uniformly bounded.

In the case of $R=\RR$ we appeal to the finite-dimensionality of the chain spaces $C_i(\partial I^k;\RR)$, $C_i(I^k;\RR)$ and $C_i(I^k,\partial I^k;\RR)$.
Consider the following standard diagram 
\begin{diagram}
&& C_{i+1}(I) & \rTo^{\ \ \ \ \ \ q_{i+1}\ \ \ \ \ \ } & C_{i+1}(I,\partial I) &\rTo &0\\
&& \dTo_{\partial_{i+1}} &&\dTo_{\partial_{i+1}}\\
C_i(\partial I) &\rTo^{\ \ \ \ \ \ j\ \ \ \ \ \ } & C_i(I) & \rTo^{\ \ \ \ \ \ q_i\ \ \ \ \ \ } & C_i(I,\partial I) &\rTo &0\\
&& \dTo_{\partial_i} &&\dTo_{\partial_i}\\
&& C_{i-1}(I) & \rTo^{\ \ \ \ \ \ q_{i-1}\ \ \ \ \ \ } & C_{i-1}(I,\partial I) &\rTo &0.\\
\end{diagram}
Given an element $c_I\in C_i(I)$ representing a relative cycle we have that $\partial_i q_i(c_I)=0$. 
Thus by exactness, there exists an element $b_I\in C_{i+1}(I)$ such that $\partial_{i+1}q_{i+1}b_I=q_i(c_I)$.
Therefore,
$$q_i(c_I-\partial_{i+1}b_I)=0$$
and consequently there exists $a_I$ in $C_i(\partial I)$ such that
$$c_I-\partial_{i+1}b_I=j(a_I).$$

Equip the chains $C_i(\partial I)$ and $C_i(I)$ with the supremum norms,
$$\Vert c\Vert=\sup \setd{\vert c(\sigma)\vert}{\sigma \in \Delta_d(X)},$$ 
and the relative chains $C_i(I,\partial I)$ with the corresponding quotient norm. (In fact, since the dimensions of all these chain groups are finite, we could choose any other norm).
Since we are working in finite dimensional spaces, for any linear
map
$$L: V\to W$$ there is a constant $C(L)>0$ such that 
$$C(L)\Vert v\Vert \le \Vert Lv\Vert,$$
for every $v\in (\ker L)^{\perp}$, where by $X^{\perp}$ for a subspace $X\subseteq V$ we denote the complement (e.g., the orthogonal complement with respect to the standard inner product) of $X$.

Now, since $\partial_{i+1}q_{i+1}$ is onto $\ker \partial_i$ we can choose $b_I$ in such a way that
$$\Vert b_I\Vert\le C(\partial_{i+1}q_{i+1})^{-1} \Vert q_i\Vert \Vert c_I\Vert,$$
where $\Vert q_i\Vert$ denots the standard operator norm of $q_i$.
For the same reason we can choose $a_I$ so that
\begin{align*}
\Vert a_I \Vert & \le C(j)^{-1} \Vert c_I-\partial b_I\Vert \\
&\le C(j)^{-1}\left( \Vert c_I\Vert + \Vert b_I\Vert\right)\\
&\le  C(j)^{-1}\left( \Vert c_I \Vert +  C(\partial_{i+1}q_{i+1})^{-1} \Vert q_i\Vert \Vert c_I\Vert \right)\\
&= K \Vert c_I\Vert,
\end{align*}
where $K$ is a constant independent of $c_I$.
All the norms and estimates also depend on the dimension of the chain groups $I$ and $\partial I$, i.e. on the triangulation of the cube, but in our case 
all of the above are uniform throughout the cube complex. 
\end{proof}

From the exact sequence (\ref{equation: exact sequence relative homology}) we obtain 

\begin{corollary}\label{corollary : surjectivity from relative homology}
Let $R=\ZZ,\RR$ and let $A,Y$ be as above.
The map
$$i_*: H_{i}^{(\infty)}(A;R)\to H_{i}^{(\infty)}(Y;R),$$
induced by the inclusion $i:A\to X$, is surjective for $i\le k-1$.
\end{corollary}

In other words, every class in the homology of $Y$ can be represented by a cycle supported only on $A$.

\begin{remark}\normalfont
Examining the above proof one can derive that the statement of Corollary \ref{corollary : surjectivity from relative homology}
holds without change
for the almost equivariant homology $H^{ae}_*$.
\end{remark}

\begin{remark}\normalfont
In low dimensions ($k=2,3$) one can prove Corollary \ref{corollary : surjectivity from relative homology} directly by
showing that an $i$-cycle on a triangulated $k$-cube for $i\le k-1$ can be represented by a cycle suported only on the 
boundary of the cube. In higher dimensions the same argument is likely possible, however we suspect it would be less efficient in higher .
\end{remark}

\section{Proof of the main theorem}\label{section : proof}

Let $\Gamma_1,\dots,\Gamma_n$ be uniformly locally finite infinite graphs with a simplicial structure. 
We consider their triangulated Cartesian product, as in e.g. \cite[Chapter II.8]{eilenberg}. 
More precisely, for all $i\in\{1,\dots,n\}$
we consider an order $\leq$ on the vertex set $V_{\Gamma_i}$. Then the triangulated Cartesian product 
$X:=\Gamma_1\times_{t}\dots\times_{t}\Gamma_n$ is an $n$-dimensional simplicial complex having vertex set
\[
 V_{\Gamma_1\times_{t}\dots\times_{t}\Gamma_n}:=V_{\Gamma_1}\times\dots\times V_{\Gamma_n}.
\]
and with simplices given by the totally ordered tuples in the product order, see e.g. \cite{eilenberg}. 

Since the trangulated Cartesian product $X$ is a
product of graphs, it has the structure of an $n$-dimensional cube complex. For any $k\in\{0,\dots,n\}$
we denote by $X_k\subset X$ the $k$-dimensional cube subcomplex given by the union of all the $k$-cubes in $X$. Following the 
notation given in Section \ref{subsection : relative homology}, we denote by $\partial X_k$ the collection of the topological boundaries
of all the $k$-cubes in $X_k$. It is easy to see that for any $k\in\{1,\dots,n\}$ we have $\partial X_k= X_{k-1}$.
Indeed, since each graph is an infinite connected simplicial complex, each $(k-1)$-cube is contained
in some $k$-cube.
Consider $R=\ZZ,\RR$. For \text{any $k\in\{0,\dots,n\}$}, applying Corollary \ref{corollary : surjectivity from relative homology} $(n-k)$-times, we obtain a surjective map
$$i_k: H_{k}^{(\infty)}(X_k;R)\to H_{k}^{(\infty)}(X;R)$$
induced by the inclusion $i\colon X_k\longrightarrow X$. This implies that for any $\alpha\in H_{k}^{(\infty)}(X;R)$
there exists a cycle $c\in C_{k}^{(\infty)}(X_k;R)$ such that $\alpha=[c]$. In particular, the cycle~$c$ is
an infinite locally finite linear combination of simplices supported on the $k$-cubes of $X$. 

Let $Q_k$ be any $k$-cube in $X$. Then, 
by the cycle condition on $c$ it is easy to see that all the $k$-simplices contained in $Q_k$
appear in $c$ with the same coefficient. In particular, the coefficients of $c$ on each $k$-cube
are constant. Thus, to simplify the notation, we can proceed by considering the cycle $c$ representing $\alpha\in H_{k}^{(\infty)}(X;R)$
as an infinite sum of $k$-cubes 
in $X$. In particular, we can write $c$ as 
 \begin{equation}\label{eq:cyclecubes}
  c=\sum_{Q_k\in X_k}c\bigl(Q_k\bigr)\cdot Q_k,
 \end{equation}
 where $c\bigl(Q_k\bigr)$ is the coefficient of $c$ associated to any $k$-simplex contained in~$Q_k$.
For any $k\in\{0,\dots,n\}$, a $k$-cube $Q_k$ is given by the product of $k$ edges and~$n-k$ vertices in $X$. In particular, any $k$-cube is determined by
\begin{itemize}
 \item a choice of ordered indices $I=\{i_1,\dots,i_k\}\subset \{1,\dots,n\}$;
 \item a choice of $k$ edges $e_{i_1},\dots,e_{i_k}$ in $\Gamma_{i_1},\dots, \Gamma_{i_k}$;
 \item a choice of $n-k$ vertices $x_i$ in $\Gamma_i$ for all $i\notin I$.
 \end{itemize}
 Thus, a $k$-cube in $X$ can be represented as
\[
 x_1\times\dots\times e_{i_1}\times\dots\times e_{i_2}\times\dots\times\dots\times e_{i_k}\times x_{i_{k}+1}\times\dots\times x_n.
\]
Let $j\in\{1,\dots,n\}$. 
We say that a $k$-cube \emph{lies in the $\Gamma_j$-hyperplane} if 
$i_s\neq j$ for any $s\in\{1,\dots,k\}$. In particular, a $k$-cube lies in the $\Gamma_j$-hyperplane 
if its $j$-th coordinate is a vertex. We prove Theorem \ref{theorem : main1} in two steps.

\noindent\textit{Step 1: Killing coefficients in one direction.} Let $X:=\Gamma_1\times_t \dots\times_t \Gamma_n$ be the triangulated Cartesian 
product of graphs as before.
We consider the fine uniformly finite homology of $X$ with coefficients in $R=\ZZ,\RR$. For simplicity, we omit the coefficients
in the notation and we write $H_*^{(\infty)}(X)$.

Let $k\in\{1,\dots,n-1\}$. In the first step we prove that for any $j\in\{1,\dots,n\}$ and for any 
class $\alpha\in H_k^{(\infty)}(X)$, we can find a cycle representing $\alpha$
that is not supported on cubes lying in the $\Gamma_j$-hyperplane. More precisely, we have
\begin{lemma} \label{lemma:1-direction}
Let $k\in\{1,\dots, n-1\}$ and let $c\in C^{(\infty)}_k(X)$ be a cycle. Then for \text{any $j\in\{1,\dots,n\}$} 
there is a chain $T_j\in C^{(\infty)}_{i+1}(X)$ such that 
$$\bigl(c-\partial T_j\bigr)\bigl(Q_k\bigr)=0,$$
for each $k$-cube $Q_k$ lying on the $\Gamma_j$-hyperplane. Moreover, $[c]=\bigl[c-\partial T_j\bigr]$ in $H_k^{(\infty)}(X)$.
\end{lemma}
\begin{proof}
Following the notation in \eqref{eq:cyclecubes}, we write $c$ as a sum of $k$-cubes. 
We prove the statement for $j=1$ and the same argument can be used to prove it for any $j\in\{1,\dots,n\}$.
Notice that every $k$-cube lying on the $\Gamma_1$-hyperplane is determined by a choice of ordered indices 
$i_1<\dots<i_k$ such that $i_1 >1$.
In particular, any $k$-cube lying on the $\Gamma_1$-hyperplane is of the form
\begin{equation}\label{eq:i_1>1}
Q^{i_1>1}_k=x_1\times\dots\times e_{i_1}\times\dots\times e_{i_2}\times\dots\times\dots\times e_{i_k}\times\dots\times x_n 
\end{equation}

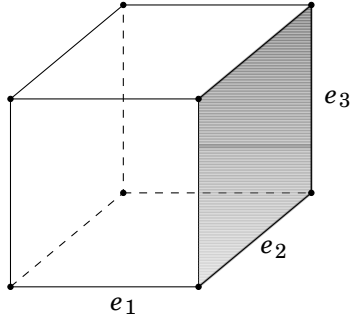
\begin{figure}[th]
\begin{center}
\begin{tikzpicture}[scale=0.5]
\draw [dashed] (4,3.5) -- (9,3.5); 

\draw  (1,1) -- (6,1) -- (6,6) -- (1,6) -- (1,1); 
\draw  [line width=1,shade,bottom color=lightgray,top color=black,opacity=0.3] (6,6) -- (9,8.5) -- (9,3.5) -- (6,1); 
\draw  (6,6) -- (9,8.5) -- (9,3.5) -- (6,1); 

\draw  (1,6) -- (4,8.5) -- (9,8.5); 
\draw [dashed] (1,1) -- (4,3.5) -- (4,8.5); 
\filldraw (1,1) circle (2pt);
\filldraw (1,6) circle (2pt);
\filldraw (6,1) circle (2pt);
\filldraw (6,6) circle (2pt);

\filldraw (4,8.5) circle (2pt);
\filldraw (9,8.5) circle (2pt);
\filldraw (4,3.5) circle (2pt);
\filldraw (9,3.5) circle (2pt);
\node at (4,0.5) {$e_1$};
\node at (8,2) {$e_2$};
\node at (9.7,6) {$e_3$};

\end{tikzpicture}
\caption{A 3-cube with a 2-cube in its boundary lying in the $\Gamma_1$ hyperplane. }\label{fig:cube}
\end{center}
\end{figure}

We construct a $(k+1)$-chain $T_1$ by constructing ``tails'' of cubes in $X$. More precisely, since $\Gamma_1$ is
a non-amenable simplicial complex, by the Eilenberg-swindle construction given in Section \ref{s:es} for any
vertex $x\in V_{\Gamma_1}$ we can consider a tail of $1$-simplices $t_x$ of the form \eqref{eq : tail}
such that $\sum_{x\in V_{\Gamma_1}}t_x\in C_{1}^{(\infty)}(\Gamma_1)$ and~$\partial t_x=x$.

For any $Q^{i_1>1}_k$ of the form \eqref{eq:i_1>1} consider
\[
 t_{Q^{i_1>1}_k}=t_{x_1}\times\dots\times e_{i_1}\times\dots\times e_{i_2}\times\dots\times\dots\times e_{i_k}\times\dots\times x_n.
\]

\begin{figure}[th]
\begin{center}
\begin{tikzpicture}[scale=0.3]
\draw [dashed] (4,3.5) -- (9,3.5); 
\draw  (1,1) -- (6,1) -- (6,6) -- (1,6) -- (1,1); 
\draw  [shade,bottom color=lightgray, top color=black,opacity=0.3] (6,6) -- (9,8.5) -- (9,3.5) -- (6,1); 
\draw  (6,6) -- (9,8.5) -- (9,3.5) -- (6,1); 

\draw  (1,6) -- (4,8.5) -- (9,8.5); 
\draw [dashed] (1,1) -- (4,3.5) -- (4,8.5); 


\draw  (-4,0) -- (-4,5) -- (-1,7.5); 
\draw [dashed] (-1,7.5) -- (-1,2.5) -- (-4,0);

\draw  (-9,1) -- (-9,6) -- (-6,8.5); 
\draw [dashed] (-6,8.5) -- (-6,3.5) -- (-9,1);

\draw  (-14,0) -- (-14,5) -- (-11,7.5); 
\draw [dashed] (-11,7.5) -- (-11,2.5) -- (-14,0);

\draw (1,1) -- (-4,0) -- (-9,1) -- (-14,0) -- (-19,1);
\draw (1,6) -- (-4,5) -- (-9,6) -- (-14,5) -- (-19,6);
\draw [dashed] (4,3.5) -- (-1,2.5) -- (-6,3.5) -- (-11,2.5) -- (-16,3.5);
\draw (4,8.5) -- (-1,7.5) -- (-6,8.5) -- (-11,7.5) -- (-16,8.5);

\filldraw (-4,0) circle (2pt);
\filldraw (-4,5) circle (2pt);
\filldraw (-1,7.5) circle (2pt);
\filldraw (-1,2.5) circle (2pt);

\filldraw (-9,1) circle (2pt);
\filldraw (-9,6) circle (2pt);
\filldraw (-6,8.5) circle (2pt);
\filldraw (-6,3.5) circle (2pt);

\filldraw (-14,0) circle (2pt);
\filldraw (-14,5) circle (2pt);
\filldraw (-11,7.5) circle (2pt);
\filldraw (-11,2.5) circle (2pt);

\draw [gray] (6,-1) -- (1,-1) -- (-4,-2) -- (-9,-1) -- (-14,-2) -- (-19,-1); 

\filldraw (6,-1) circle (2pt);
\filldraw (1,-1) circle (2pt);
\filldraw (-4,-2) circle (2pt);
\filldraw (-9,-1) circle (2pt);
\filldraw (-14,-2) circle (2pt);
\filldraw (-19,-1) circle (2pt);

\filldraw (1,1) circle (2pt);
\filldraw (1,6) circle (2pt);
\filldraw (6,1) circle (2pt);
\filldraw (6,6) circle (2pt);

\filldraw (4,8.5) circle (2pt);
\filldraw (9,8.5) circle (2pt);
\filldraw (4,3.5) circle (2pt);
\filldraw (9,3.5) circle (2pt);
\node at (4,0.2) {$e_1$};
\node at (8.4,2) {$e_2$};
\node at (10,6) {$e_3$};

\node at (2.5,-2) {$t_1$};

\end{tikzpicture}
\caption{A 3-dimensional panel attached to the gray 2-cube along the tail $t_1$.}\label{fig:cube}
\end{center}
\end{figure}
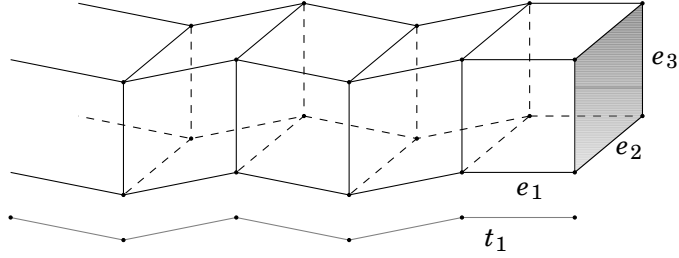

This is an infinite sum of $(k+1)$-cubes ``attached'' to $Q^{i_1>1}_k$ in $X$ and it is given by the Cartesian product of $Q^{i_1>1}_k$ with
$t_{x_1}$. For each $k$-cube $Q_k$ we call~$t_{Q_k}$ a \textit{panel of $(k+1)$-cubes in $X$ attached to $Q_k$}. 
We have
\begin{align}\label{eq:boundarypanel1}
 \partial t_{Q^{i_1>1}_k}  = & \partial t_{x_1}\times\dots\times e_{i_1}\times\dots\times e_{i_2}\times\dots\times\dots\times e_{i_k}\times\dots\times x_n\nonumber\\
 & \cup t_{x_1}\times\partial (\dots\times e_{i_1}\times\dots\times e_{i_2}\times\dots\times\dots\times e_{i_k}\times\dots\times x_n) \\
= & Q^{i_1>1}_k \cup t_{x_1}\times\partial (\dots\times e_{i_1}\times\dots\times e_{i_2}\times\dots\times\dots\times e_{i_k}\times\dots\times x_n)\nonumber. \end{align}
Notice that $t_{x_1}\times\partial (\dots\times e_{i_1}\times\dots\times e_{i_2}\times\dots\times\dots\times e_{i_k}\times\dots\times x_n)$
is an infinite sum of $k$-cubes, where $i_1=1$. 

We proceed by constructing $t_{Q^{i_1>1}_k}$ for any $k$-cube appearing in $c$ and lying on the $\Gamma_1$-hyperplane.
Then, using the same notation as in \eqref{eq:cyclecubes}, we define:
\begin{equation}\label{eq:T_1}
 T_1=\sum_{Q^{i_1>1}_k\in X_k}c\bigl(Q^{i_1>1}_k\bigr)\cdot t_{Q^{i_1>1}_k}.
\end{equation}
Notice that $T_1$ is an
infinite locally finite linear combination of $(k+1)$-simplices 
in $X$. All the simplices contained in a given cube $Q_{k+1}$ appear in $T_1$ 
with the same coefficient. We denote this coefficient as $c\bigl(Q_{k+1}\bigr)$. Thus, to prove that~$T_1$
is a well-defined element in $C^{(\infty)}_{k+1}(X)$ it sufficies to show that these coefficients are uniformly bounded.
Similarly to \eqref{eq:simplices_in_tail}, for any $(k+1)$-cube $Q_{k+1}$,
we can define the set
\[
 E\bigl(Q_{k+1}\bigr):=\bigl\{Q_k\in X_k\;\big|\; t_{Q_k} \; \; \text{passes through} \; \; Q_{k+1} \bigr\}.
\]
Recall that $\sum_{x\in V_{\Gamma_1}}t_x\in C_1^{(\infty)}(\Gamma_1)$ is a uniformly bounded chain. In particular, there exists a constant 
$K>0$ such that for
any simplex $\sigma\in\Delta_1(\Gamma_1)$ we \text{have $|E(\sigma)|\leq K$}. It is immediate to see that, by 
construction of the panels of $(k+1)$-cubes, for any $(k+1)$-cube $Q_{k+1}$ we have
\[
\bigl|E\bigl(Q_{k+1}\bigr)\bigr|\leq K.
\]
Thus, any $(k+1)$-cube $Q_{k+1}$ appears in $T_1$ with coefficient
\[
c\bigl(Q_{k+1}\bigr)=\sum_{Q_k\in E(Q_{k+1})}c\bigl(Q_k\bigr)\leq K\cdot ||c||_{\infty}.
\]
In particular, $T_1$ is a well-defined element in $C^{(\infty)}_{k+1}(X)$.
Writing $c$ as a sum of cubes as in \eqref{eq:cyclecubes}, we have
\[
 c-\partial T_1=\sum_{Q_k\in X_k}c\bigl(Q_k\bigr)\cdot Q_k-\sum_{Q^{i_1>1}_k\in X_k}c\bigl(Q^{i_1>1}_k\bigr)\cdot \partial t_{Q^{i_1>1}_k}.
\]
By \eqref{eq:boundarypanel1}, it is easy to see that $\bigl(c-\partial T_1\bigr)\bigl(Q^{i_1>1}_k\bigr)=0$ for any 
$k$-cube $Q_k^{i_1>1}$ lying on the $\Gamma_1$-hyperplane. Clearly, since $c$ and $c-\partial T_1$ 
differ by a boundary, we have
\[
\bigl[c-\partial T_1\bigr]=[c] \text{  in $H_k^{(\infty)}(X)$}.
\]
Thus the claim follows.
\end{proof}

\noindent\textit{Step 2: The process in Step 1 does not change vanishing in other hyperplanes.} The second step towards the proof of 
Theorem \ref{theorem : main1} is to prove that the operation of attaching panels of $(k+1)$-cubes
to a $k$-cycle $c$ does not change the vanishing of the coefficients of $c$ on cubes in other hyperplanes.

\begin{lemma}\label{lemma:vanishing}
Let $k\in\{1,\dots,n-1\}$ and let $c\in C_k^{(\infty)}(X)$ be a cycle. Suppose there exists $i\in\{1,\dots,n\}$ such that
$c(Q_k)=0$ for all $k$-cubes $Q_k$ lying on the $\Gamma_i$-hyperplane.
Let $j\in\{1,\dots,n\}$, $j\neq i$ and let $T_j\in C_{k+1}^{(\infty)}(X)$ be as in Lemma \ref{lemma:1-direction}.
Then, $c-\partial T_j$ is a cycle in $C_k^{(\infty)}(X)$ such that 
\[
\bigl(c-\partial T_j\bigr)\bigl(Q_k\bigr)=0
\]
for all $k$-cubes $Q_k$ lying on the $\Gamma_j$-hyperplane and for all $k$-cubes $Q_k$ lying on the $\Gamma_i$-hyperplane.
Moreover, $[c]=\bigl[c-\partial T_j\bigr]$ in $H_k^{(\infty)}(X)$.
\end{lemma}
\begin{proof}
Let $c\in C_k^{(\infty)}(X)$ be a cycle. We can take $i=n$; more precisely, we assume \text{that $c(Q_k)=0$} for any $k$-cube $Q_k$
lying on the $\Gamma_n$-hyperplane. The same argument can be used to prove the statement for any $i\in\{1,\dots,n\}$.
Following the notation given in \eqref{eq:cyclecubes}, $c$ can be written in the form
 \[
  \sum_{Q^{i_k=n}_k\in X_k}c\bigl(Q^{i_k=n}_k\bigr)\cdot Q^{i_k=n}_k,
 \]
where $Q^{i_k=n}_k$ are cubes not lying on the $\Gamma_n$-hyperplane; i.e., they are cubes determined by a choice of ordered
indices $i_1<\dots<i_k$ such that $i_k=n$.
Without loss of generality, we can take $j=1$. Indeed, by reordering the factors~$\Gamma_j$, we 
can always reduce to the case $j=1$. In particular, we consider the $k+1$-chain~$T_1$ as given in \eqref{eq:T_1}. By Lemma \ref{lemma:1-direction},
we have that $\bigl(c-\partial T_1\bigr)\bigl(Q_k\bigr)=0$ for all $k$-cubes $Q_k$ lying on the $\Gamma_1$-hyperplane.
Thus, to prove the lemma it suffices to show \text{that $\bigl(c-\partial T_1\bigr)\bigl(Q_k\bigr)=0$} for all the $k$-cubes $Q_k$ lying on the $\Gamma_n$-hyperplane.

Notice that $T_1$ is a sum of panels of $k+1$-cubes ``attached'' to $k$-cubes of 
the form \eqref{eq:i_1>1}. These $k$-cubes can be of the following two types:
\begin{enumerate}
\item $Q^{i_1>1, i_k<n}_{k}=x_1\times\dots\times e_{i_1}\times\dots\times e_{i_2}\times\dots\times\dots\times e_{i_k}\times\dots\times x_n $
\item $Q^{i_1>1, i_k=n}_{k}=x_1\times\dots\times e_{i_1}\times\dots\times e_{i_2}\times\dots\times\dots\times e_{i_{k-1}}\times\dots\times e_n$
\end{enumerate}
Since $c$ is, by assumption, not supported on cubes lying on $\Gamma_n$-hyperplanes, $T_1$ is a sum of panels of $k+1$-cubes attached to
cubes of the form (2). 
In particular, we have
\[
T_1=\sum_{Q^{i_1>1, i_k=n}_{k}\in X_k}c\bigl(Q^{i_1>1, i_k=n}_{k}\bigr)\cdot t_{Q^{i_1>1, i_k=n}_{k}},
\]
where for any $k$-cube $Q^{i_1>1,i_k=n}_k$ of type (2) the panel $t_{Q^{i_1>1, i_k=n}_{k}}$ of $(k+1)$-cubes is of the following form:
\[
t_{Q^{i_1>1, i_k=n}_{k}}=t_{x_1}\times\dots\times e_{i_1}\times\dots\times e_{i_2}\times\dots\times\dots\times e_{i_{k-1}}\times\dots\times e_n.
\]
For every panel $t_{Q^{i_1>1, i_k=n}_{k}}$, we have:
\begin{align*}\label{eq:boundarypanel2}
 \partial t_{Q^{i_1>1, i_k=n}_{k}} =  & \partial \bigl(t_{x_1}\times \dots\times e_{i_1}\times\dots\times e_{i_2}\times\dots\times\dots\times e_{i_{k-1}}\times\dots\times e_n\bigr)\\
    = & Q^{i_1>1, i_k=n}_{k}\cup t_{x_1} \times\partial \bigl(\dots\times e_{i_1}\times\dots\times e_{i_2}\times\dots\times\dots\times e_{i_{k-1}}\times\dots\bigr)\times e_n\nonumber\\
    &  \cup t_{x_1} \times \dots\times e_{i_1}\times\dots\times e_{i_2}\times\dots\times\dots\times e_{i_{k-1}}\times\dots\times \partial e_n.\nonumber
\end{align*}
Notice that the second term $t_{x_1} \times\partial (\dots\times e_{i_1}\times\dots\times e_{i_2}\times\dots\times\dots\times e_{i_{k-1}}\times\dots)\times e_n$
on the right side of the equation above 
 is a sum of cubes of the form $Q^{i_1=1, i_k=n}$; i.e., cubes neither
 lying in the $\Gamma_1$ nor in the $\Gamma_n$-hyperplane. 
In particular, we have:
\begin{align*}
 c-\partial T_1  = & 
 \sum_{Q^{i_k=n}_k\in X_k}c\bigl(Q^{i_k=n}_k\bigr)\cdot Q^{i_k=n}_k -
 \sum_{Q^{i_1>1, i_k=n}_k\in X_k}c\bigl(Q^{i_1>1, i_k=n}_k\bigr)\cdot\partial t_{Q^{i_1>1, i_k=n}_k}\\
  = & \sum_{Q^{i_1>1, i_k=n}_k\in X_k}c\bigl(Q^{i_1>1, i_k=n}_k\bigr)\cdot\bigl(t_{x_1} \times \dots\times e_{i_1}\times\dots\times\dots\times e_{i_{k-1}}\times\dots\times \partial e_n\bigr)\\
 & + R,
 \end{align*}
where $R$ is an infinite sum of $k$-cubes with $i_k=n$, i.e cubes not lying on $\Gamma_n$-hyperplane. 

\begin{figure}[h]
\begin{center}
\begin{tikzpicture}[scale=0.3]


\draw [dashed] (4,3.5) -- (9,3.5); 


\draw  (1,1) -- (6,1) -- (6,6) -- (1,6) -- (1,1); 
\draw  [shade,bottom color=lightgray, top color= gray, opacity=0.4] (9,3.5) -- (6,1) -- (1,1) -- (4,3.5) -- (9,3.5); 
\draw  (9,3.5) -- (6,1) -- (1,1) -- (4,3.5) -- (9,3.5); 

\draw  [shade,bottom color=darkgray, top color=black, opacity=0.6] (6,6) -- (9,8.5) -- (9,3.5) -- (6,1); 
\draw  (6,6) -- (9,8.5) -- (9,3.5) -- (6,1); 

\draw  (1,6) -- (4,8.5) -- (9,8.5); 
\draw [dashed] (1,1) -- (4,3.5) -- (4,8.5); 


\draw [shade, bottom color=darkgray, top color=black, opacity=0.5] (6,1) -- (5,-4) -- (8,-1.5) -- (9,3.5);
\draw (6,1) -- (5,-4) -- (8,-1.5) -- (9,3.5);

\draw [shade, bottom color=darkgray, top color=black, opacity=0.5] (6,1) -- (7.5,-4) -- (9.5,-1.5) -- (9,3.5);
\draw  (6,1) -- (7.5,-4) -- (9.5,-1.5) -- (9,3.5);

\draw [shade, bottom color=lightgray,opacity=0.6] (6,1) -- (11,0) -- (14,2.5) -- (9,3.5);
\draw  (6,1) -- (11,0) -- (14,2.5) -- (9,3.5);

\draw [shade, bottom color=lightgray,opacity=0.6] (6,1) -- (11,2) -- (14,4.5) -- (9,3.5);
\draw (6,1) -- (11,2) -- (14,4.5) -- (9,3.5);


\draw  (-4,0) -- (-4,5) -- (-1,7.5); 
\draw [dashed] (-1,7.5) -- (-1,2.5) -- (-4,0);

\draw  (-9,1) -- (-9,6) -- (-6,8.5); 
\draw [dashed] (-6,8.5) -- (-6,3.5) -- (-9,1);

\draw (1,1) -- (-4,0) -- (-9,1) -- (-14,0);
\draw (1,6) -- (-4,5) -- (-9,6) -- (-14,5);

\draw [dashed] (4,3.5) -- (-1,2.5) -- (-6,3.5) -- (-11,2.5) ;
\draw (4,8.5) -- (-1,7.5) -- (-6,8.5) -- (-11,7.5) ;

\draw (6,1) -- (1,1) -- (-4,0) -- (-9,1) -- (-14,0) ;

\filldraw (-4,0) circle (2pt);
\filldraw (-4,5) circle (2pt);
\filldraw (-1,7.5) circle (2pt);
\filldraw (-1,2.5) circle (2pt);

\filldraw (-9,1) circle (2pt);
\filldraw (-9,6) circle (2pt);
\filldraw (-6,8.5) circle (2pt);
\filldraw (-6,3.5) circle (2pt);

\filldraw (1,1) circle (2pt);
\filldraw (1,6) circle (2pt);
\filldraw (6,1) circle (2pt);
\filldraw (6,6) circle (2pt);

\filldraw (4,8.5) circle (2pt);
\filldraw (9,8.5) circle (2pt);
\filldraw (4,3.5) circle (2pt);
\filldraw (9,3.5) circle (2pt);
\node at (4,0.3) {$e_1$};
\node at (8.7,2.3) {$e_2$};
\node at (9.9,6) {$e_3$};

\end{tikzpicture}
\caption{The coefficients on the horizontal 2-cubes vanish, the coefficients of the vertical 2-cubes sum up to 0 along 
$x_1\times e_2\times x_3$. 
Consequently, attaching the panel $t_1\times e_2\times e_3$ to $x_1\times e_2\times e_3$ does not introduce any coefficients on the 
2-dimensional panel $t_1\times e_2$ attached to $x_1\times e_2\times x_3$.}\label{fig:cube}
\end{center}
\end{figure}
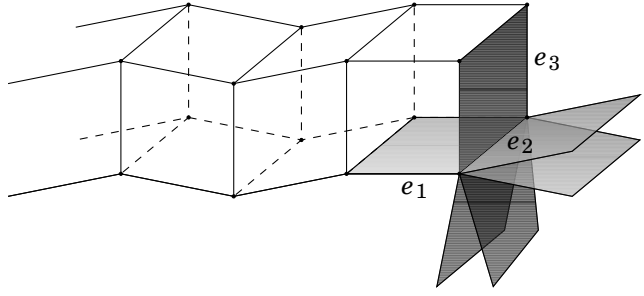

Thus, to prove that $\bigl(c-\partial T_1\bigr)\bigl(Q_k\bigr)=0$ for all $k$-cubes $Q_k$ lying on the $\Gamma_n$-hyperplane, it suffices to show that 
\[
S:= \sum_{Q^{i_1>1, i_k=n}_k\in X_k}c\bigl(Q^{i_1>1, i_k=n}_k\bigr)\cdot\bigl(t_{x_1} \times \dots\times e_{i_1}\times\dots\times\dots\times e_{i_{k-1}}\times\dots\times \partial e_n\bigr)=0.
\]
Indeed, it is easy to see that any $k$-cube appearing in $S$ is lying on the $\Gamma_n$-hyperplane. 
In particular, let
\[
S_Q=e_{x_1} \times \dots\times e_{i_1}\times\dots\times e_{i_2}\times\dots\times\dots\times e_{i_{k-1}}\times\dots\times x_n
\]
be a cube appearing in $S$, where $e_{x_1}$ is an edge appearing in the tail $t_{x_1}$ 
attached to a vertex $x_1$ in $\Gamma_1$, while 
$x_n$ is a vertex of some edge $e_n$ in $\Gamma_n$. 
Notice that the coefficient of $S_Q$ in $S$
is equal to the sum of $c\bigl(Q^{i_1>1, i_k=n}_k\bigr)$ for any cube~$Q^{i_1>1, i_k=n}_k$ whose corresponding
tail $t_{Q^{i_1>1, i_k=n}_k}$ contains $S_Q$ in its boundary. In other words, $S_Q$ appears in $S$ with coefficient
\begin{equation}\label{eq:coefficientS_Q}
 c\bigl(S_Q\bigr)=\sum_{Q^{i_1>1, i_k=n}_k\in X_k \text{  s.t.  } S_Q\in \partial t_{Q^{i_1>1, i_k=n}_k} }c\bigl(Q^{i_1>1, i_k=n}_k\bigr).
\end{equation}

Thus, to prove that $S=0$, it suffices to show that $c(S_Q)=0$ for any $k$-cube $S_Q$ appearing in $S$. (Note that 
we are avoiding writing orientation of simplices and the corresponding signs in the formulas, in order to keep the notation
under control. It is in fact easy to see that this omission does not affect the computations).

Our argument now relies on the fact that $c$ is a cycle. 
By definition, $\partial c=0$. Moreover, $c$ is supported only on $k$-cubes of the form $Q_k^{i_k=n}$.
This implies that for any $k-1$-cube of the form
\[
x_1 \times \dots\times e_{i_1}\times\dots\times e_{i_2}\times\dots\times\dots\times e_{i_{k-1}}\times\dots\times x_n
\]
we have
\[
 \sum_{e_n \text{  s.t.  } x_n\in \partial e_n}c\bigl(x_1 \times \dots\times e_{i_1}\times\dots\times e_{i_2}\times\dots\times\dots\times e_{i_{k-1}}\times\dots\times e_n\bigr)=0
\]
Notice that the $k$-cubes $Q^{i_1>1, i_k=n}_k$ 
which contribute to the coefficient \eqref{eq:coefficientS_Q} are all 
cubes of the form:
\[
 e_{x_1} \times \dots\times e_{i_1}\times\dots\times e_{i_2}\times\dots\times\dots\times e_{i_{k-1}}\times\dots\times \tilde{e}_n,
\]
where $\tilde{e}_n$ is any edge in $\Gamma_n$ such that $x_n\in\partial \tilde{e}_n$. Thus we have
\[
c\bigl(S_Q\bigr)=\sum_{\tilde{e}_n \text{  s.t.  } x_n\in \partial \tilde{e}_n }c\bigl(x_1 \times \dots\times e_{i_1}\times\dots\times e_{i_2}\times\dots\times\dots\times e_{i_{k-1}}\times\dots\times \tilde{e}_n\bigr).
\]
In particular, $c\bigl(S_Q\bigr)=0$.
We can use the same argument to prove that any $k$-cube appears in $S$ with zero coefficient. In particular, $S=0$. Then we have 
\[
\bigl(c-\partial T_1\bigr)\bigl(Q_k\bigr)=0
\]
for any $k$-cube $Q_k$ lying on the $\Gamma_n$-hyperplane.
Since $c-\partial T_1$ is a cycle that differs from $c$ by a boundary, we have $[c]=\bigl[c-\partial T_1\bigr]$ in $H_k^{(\infty)}(X)$.
\end{proof}
Now we are ready to prove Theorem \ref{theorem : main1}.
\begin{proof}[Proof of Theorem \ref{theorem : main1}]
The case $k=0$ follows from Block and Weinberger (Theorem \ref{t:BW}). Let $k\in\{1,\dots,n-1\}$ and let $\alpha\in H_k^{(\infty)}(X)$.
Consider a cycle $c\in C^{(\infty)}_k(X)$ such that $\alpha=[c]$. 
 Let $c_1:=c-\partial T_1$, where $T_1\in C_{k+1}^{(\infty)}(X)$ is constructed as in Lemma \ref{lemma:1-direction}. 
 Then by Lemma \ref{lemma:1-direction} we have $c_1\bigl(Q_k\bigr)=0$ for all $k$-cubes lying on the $\Gamma_1$-hyperplane. 
 Moreover $\alpha=[c]=[c_1]$.

 Let $c_2:=c_1-\partial T_2$, where $T_2\in C_{k+1}^{(\infty)}(X)$ is constructed as in Lemma \ref{lemma:1-direction}.
 Using Lemma \ref{lemma:1-direction} and Lemma \ref{lemma:vanishing}, we have
 \begin{align*}
c_2(Q_k)=0 & \text{   for all $k$-cubes lying on the $\Gamma_1$-hyperplane (Lemma \ref{lemma:vanishing})}; \\
c_2(Q_k)=0 & \text{   for all $k$-cubes lying on the $\Gamma_2$-hyperplane (Lemma \ref{lemma:1-direction})}.
 \end{align*}
 Moreover $\alpha=[c]=[c_2]$.
 
Proceeding in this way, applying Lemma \ref{lemma:1-direction} and Lemma \ref{lemma:vanishing} at each step,
we obtain a cycle $c_n:=c_{n-1}-\partial T_n$ such that $\alpha=[c]=[c_n]$,
 \begin{align*}
c_n(Q_k)=0 & \text{   for all $k$-cubes lying on the $\Gamma_n$-hyperplane (Lemma \ref{lemma:1-direction})}
\end{align*}
and such that for all $i=1,\dots,n-1$
\begin{align*}
c_n(Q_k)=0 & \text{   for all $k$-cubes lying on the $\Gamma_i$-hyperplane (Lemma \ref{lemma:vanishing})}.
\end{align*}
Notice that, since we have proceeded by adding well-defined $(k+1)$-chains in a finite number of steps,
we have that $c_n=c-\sum_{j=1}^{n}\partial T_j$ is a cycle in $C_k^{(\infty)}(X)$.
Moreover, since $c_n$ vanishes on any $k$-cube in $X_k$ we have $c_n=0$. It follows \text{that $\alpha=[c]=[c_n]=0$}.
\end{proof}

\subsection{Proof of Theorem \ref{theorem : uf homology of product of trees}}

We now complete the proof of Theorem \ref{theorem : uf homology of product of trees}. 
Notice that the triangulated Cartesian product $T_1\times\dots\times T_n$ is a uniformly locally finite, 
uniformly contractible $n$-dimensional simplicial complex; moreover, endowed with
the simplicial metric, it is quasi-isometric to the standard Cartesian product endowed with the maximum metric. Thus the coarsening homomorphism gives 
$$H_*^{uf}(T_1\times\dots\times T_n;R)\cong H_*^{(\infty)}(T_1\times\dots\times T_n;R).$$ 
Since $T_1\times\dots\times T_n$ is non-amenable, by Block and Weinberger (Theorem \ref{t:BW}) we have
$H_0^{uf}(T_1\times\dots\times T_n;R)=0$ for $R=\ZZ,\RR$.  Since $T_1\times\dots\times T_n$ is $n$-dimensional we have
$H_k^{uf}(T_1\times\dots\times T_n;R)=0$ for $k\ge n$. From Theorem \ref{theorem : main1} it follows that \text{for $k\le n-1$} we  
\text{have $H_k^{uf}(T_1\times\dots\times T_n;R)=0$}. Thus, it remains to prove

\begin{proposition}\label{proposition: top dim homology of a tree}
$H_k^{(\infty)}\left(T_1\times\dots\times T_n;R\right)$ is infinite-dimensional.
\end{proposition}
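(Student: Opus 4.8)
The plan is to reduce the statement to an explicit construction of bounded top-dimensional cycles. Write $P:=\prod_{i=1}^k T_i$ with the triangulated Cartesian product structure of Section \ref{s:def}. Since each $T_i$ is $1$-dimensional, $P$ is a $k$-dimensional simplicial complex, so $C_{k+1}^{(\infty)}(P;R)=0$; hence $H_k^{(\infty)}(P;R)$ is just the group of bounded $k$-cycles, and distinct bounded $k$-cycles represent distinct homology classes. So it suffices to produce an infinite linearly independent family of bounded $k$-cycles in $C_k^{(\infty)}(P;R)$.

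The building blocks will be bi-infinite geodesics in the factors. Any bi-infinite geodesic line $L$ in a tree $T_i$, oriented coherently along one of its two directions, gives a bounded $1$-cycle $z_L:=\sum_{e\subset L}e\in C_1^{(\infty)}(T_i;R)$ with $\partial z_L=0$ and all coefficients $\pm1$. The key geometric input is that $T_1$ contains \emph{infinitely many pairwise edge-disjoint} bi-infinite geodesics $L^{(1)},L^{(2)},\dots$; this follows by a straightforward induction, the point being that an infinite tree of minimal degree at least $3$ has infinitely many ends, so after choosing edge-disjoint geodesics $L^{(1)},\dots,L^{(n)}$ one can pick a vertex with a spare incident edge leading into an infinite subtree disjoint from all of them and place $L^{(n+1)}$ inside that subtree. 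I would also fix one bi-infinite geodesic $L_i\subset T_i$ for each $i=2,\dots,k$.

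For each $j\geq 1$, I would then form the subcomplex $Q_j:=L^{(j)}\times L_2\times\cdots\times L_k\subset P$. With the induced triangulation $Q_j$ is a copy of $\ZZ^k$ subdivided into its standard simplices, so it carries its locally finite fundamental cycle
$$\zeta_j\in C_k^{(\infty)}(P;R),$$
the signed sum, over the unit cubes of $Q_j$ and their monotone diagonals, of the corresponding $k$-simplices, each with coefficient $\pm1$; equivalently $\zeta_j$ is the iterated cross product $z_{L^{(j)}}\times z_{L_2}\times\cdots\times z_{L_k}$. In particular $\zeta_j$ has bounded coefficients and $\partial\zeta_j=0$ (interior faces cancel, and there are no boundary faces; alternatively use the Leibniz rule and $\partial z_{L^{(j)}}=\partial z_{L_i}=0$). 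Since a non-degenerate $k$-simplex of $Q_j$ has its first-coordinate vertices spanning an edge of $L^{(j)}$, and the $L^{(j)}$ are pairwise edge-disjoint, the $\zeta_j$ have pairwise disjoint supports in $\Delta_k(P)$. A finite relation $\sum_j a_j\zeta_j=0$ then forces $a_j\cdot(\pm1)=0$ in $R$, hence $a_j=0$ (for $R=\ZZ,\RR$), so the $\zeta_j$ are linearly independent and $H_k^{(\infty)}(P;R)$ is infinite-dimensional. I expect the only non-formal point to be the combinatorial claim about edge-disjoint bi-infinite geodesics in $T_1$; everything else comes down to the classical fact that $\ZZ^k$ carries a locally finite fundamental cycle.
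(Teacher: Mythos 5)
Your proof is correct and follows essentially the same route as the paper's: both produce infinitely many disjoint products of bi-infinite geodesics and observe that, since $\prod_i T_i$ is $k$-dimensional, distinct bounded $k$-cycles already represent distinct classes. You spell out the dimension observation (which the paper leaves implicit in ``it suffices to show the space of $k$-cycles is infinite-dimensional'') and you vary the geodesic only in the first factor while fixing the others, which is a minor simplification over the paper's choice of varying a geodesic in each factor, but the substance is identical.
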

\begin{proof}
It suffices to show that the space of uniformly finite $k$-cycles is inifinite-dimensional. Consider a bi-infinite geodesic $\sigma_i$ in $T_i$.
The product $\prod_{i=1}^k \sigma_i$ is a uniformly finite $k$-cycle. If we choose $\sigma_i$ in $T_i$ such that $\sigma_i$ and $\sigma'_i$ lying in different
branches of the tree and are disjoint, then the resulting $k$-cycles~$\prod \sigma_i$ and $\prod\sigma'_i$ have disjoint supports.  
\end{proof}

The same proof gives a similar statement for the  controlled coarse homology and almost equivariant homology. 

\subsection{Other large scale homologies}
We will now indicate how the above constructions apply to other large scale homology theories. 

\subsubsection{Almost equivariant homology}
Recall that almost equivariant homology is obtained by considering only those locally finite chains that attain finitely many values. 
Such chains are automatically 
chains in the fine uniformly finite homology. We again observe that the process of attaching panels and beams preserves the property 
that a chain has finitely
many values. Therefore, we can conclude that Theorems \ref{theorem : main1} and \ref{theorem : uf homology of product of trees} 
hold when 
the uniformly finite homology  $H_*^{uf}$ is replaced with the almost equivariant homology $H_*^{ae}$.

\subsubsection{Controlled coarse homology}
Chains in the controlled coarse homology $H_*^f$ are locally finite chains, whose growth is controlled by a fixed, non-decreasing function $f$, see \cite{nowak-spakula} for details. 
In this case,
the process of attaching panels and beams can influence the control functions, however again in a controlled way. For instance,
in the case of a product for which $[X]=0$ in~$H^f_0(X)$ and $[Y]=0$ in $H_0^g(Y)$, our method gives
$$H_1^{fg}(X\times Y)=0.$$
We leave the details to the reader.

\subsection{A vanishing theorem for products of simplicial complexes in degree 1}
We remark that the methods used to prove Theorem \ref{theorem : main1} also allow to prove the following
\begin{theorem}\label{theorem : vanishing for products of simplicial complexes}
Let $X$ and $Y$ be non-amenable, locally finite simplicial complexes and let $R=\ZZ,\RR$. Then $H_1^{(\infty)}(X\times Y,R)=0$.
\end{theorem}
We only sketch the proof. The 1-skeleton of $X\times Y$ contains edges of two types: $e\times v$ and $v\times e$, 
where $v$ is a vertex and $e$ is an edge, which we call horizontal and vertical edges, respectively. As before, we can assume without
loss of generality that a class $\alpha$ in 
$H_1^{(\infty)}(X\times Y,R)$ is represented by a cocycle supported only on the vertical and horizontal edges. 
Then attaching 2-dimensional panels in the direction of $X$ to the vertical edges allows to show that the class $\alpha$
can be represented by a cocycle $c$ supported only on horizontal edges. Such $c$ is sum of disjoint cycles $c_y$, each of which is supported on
the $1$-skeleton of $X\times \set{v}$, for a vertex $y\in Y$. Attaching a panel to each $c_y$ along tails in $Y$ shows that $c$ bounds.

\section{Applications}

\subsection{A characterization of amenable groups}

Here we prove a characterization of amenability in terms of 1-homology.
\begin{proof}[Proof of Corollary \ref{corollary: amenability}]

If $G$ is non-amenable, then by Theorem \ref{theorem : main1} we have that $H_1^{(\infty)}(\Gamma \times T;\RR)=0$.

Assume now that $G$ is amenable. Let $c$ be a cycle in $C_1^{(\infty)}(\Gamma\times T;\RR)$. 
Then, as in the proof of Theorem \ref{theorem : main1}, we can choose $c'$ representing the same
class in uniformly finite homology, such that $c'$ vanishes on all horizontal edges; that is, on edges of the form $e\times p$, for an edge $e$ in $\Gamma$ and a vertex $p$ in $T$.
Then, averaging $c'$ over $\Gamma$ using the invariant mean on $G$, we obtain a new 1-cycle,~$d$. 
There is also a natural map
$$i:H_1^{(\infty)}(T;\RR)\to H_1^{(\infty)}(\Gamma\times T;\RR),$$
defined by copying a cycle in $T$ onto every vertical edge. 
The composition of $i$ with the averaging map is the identity on the cycles in $C_1^{(\infty)}(G;\RR)$. It follows that
the infinite-dimensional $H_1^{(\infty)}(T;\RR)$ injects into $H_1^{(\infty)}(G\times T;\RR)$.
\end{proof}

\subsection{Aperiodic tiles}\label{section : aperiodic tiles}
This section owes much to discussions of the second author with Shmuel Weinberger.

 Let $X$ be an infinite simplicial complex equipped with a metric.  A set of tiles for $X$ is a triple $\{ \mathcal{T},\mathcal{W},m \}$,
where $\mathcal{T}$ is a finite collection of finite polygons with boundary, called prototiles or simply tiles, each of which has distinguished faces, 
$\mathcal{W}$ is the set of all faces of the prototiles in $\mathcal{T}$ and $m\colon \mathcal{W} \to \mathcal{W}$ is a matching function, 
determining which tiles can be neighboring tiles in a tiling. 
A tiling of $X$  by the set of tiles $\mathcal{T}$ is a cover $X = \cup_{\alpha} T_i$, where each $T_i$ is simplicially isomorphic to one of the prototiles, 
every non-empty intersection of two distinct $T_i$ and $T_j$ is identified with faces $w_i$ and $w_j$ 
of the corresponding tiles and satisfies $m(w_i) = w_j$.
Such a tiling is aperiodic if no group acting on $X$ cocompactly by simplicial automorphisms preserves the tiling. 
An aperiodic set of tiles of $X$ is a set of tiles admitting only aperiodic tilings. 
Block and Weinberger used uniformly finite homology to construct aperiodic tiles for every non-amenable space \cite{bw-jams}, see also \cite{nowak-yu}.
More recently coarse homology was also used to construct aperiodic tiles for certain amenable manifolds~\cite{marcinkowski-nowak}.

Vanishing results for almost equivariant homology allow to construct aperiodic tiles for products as in \cite{bw-jams}, but using higher homology instead of 0-homology.
Let $M$ and $N$ be finite simplicial complexes, such that $\pi_1(M)$ and~$\pi_1(N)$ are both non-amenable and
$H_1(M\times N;\RR)\neq 0$.
By Theorem \ref{theorem : vanishing for products of simplicial complexes}, the universal cover $\widetilde{M}\times \widetilde{N}$ of $M\times N$ satisfies 
$$H_1^{ae}(\widetilde{M}\times\widetilde{N})=0.$$

Consider the infinite 
transfer 
$$\tau:H_1(M\times N;\RR)\longrightarrow H_1^{ae}(\widetilde{M}\times \widetilde{N};\RR)=0$$
into the almost equivariant homology of the universal cover $\widetilde{M}\times \widetilde{N}$ of $M\times N$.
Given a chain $a$ on $M\times N$ the map $\tau$ 
assigns coefficient $a(\sigma)$, where $\sigma$ is a simplex in $M\times N$, to every simplex $\widetilde{\sigma}$ laying over $\sigma$
in $\widetilde{M}\times \widetilde{N}$.

We choose a fundamental polytope for the action of $\Gamma=\pi_1(M)\times \pi_1(N)$ and consider $\tau(\alpha)=[a]$ for some class $0\neq \alpha\in H_1(\Gamma,\RR)$.
Then $a$ is $\Gamma$-equivariant and  $$a=\partial\psi,$$
for some almost equivariant 2-chain $\psi$  on $\widetilde{M}\times \widetilde{N}$.
Since $\psi$ has finitely many values, there are finitely many types of such decoration and the rule we impose is that tiles match if the restrictions of $\psi$ to the tiles give $a$ as a
boundary on neighboring tiles. 
In this way we obtain a finite set of tiles $\mathcal{T}$ of $\widetilde{M}\times \widetilde{N}$.

\begin{proposition}
The set $\mathcal{T}$ is an aperiodic set of tiles of $\widetilde{M}\times\widetilde{N}$.
\end{proposition}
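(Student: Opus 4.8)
The strategy is to mimic the Block--Weinberger proof of aperiodicity \cite{bw-jams}, with the canonical $0$-cycle $\sum_v v$ used there replaced by the transferred $1$-cycle $a$. First, $\mathcal T$ is non-empty: the decorated copies of the chosen fundamental polytope assemble, by construction, into a genuine tiling of $\widetilde M\times\widetilde N$, namely the one carrying the almost equivariant $2$-chain $\psi$ whose existence is guaranteed by $H_1^{ae}(\widetilde M\times\widetilde N;\RR)=0$ (Theorem~\ref{theorem : main1}). It remains to prove that every tiling by the tiles in $\mathcal T$ is aperiodic.

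Suppose a group $G$ acts on $\widetilde M\times\widetilde N$ cocompactly by simplicial automorphisms and preserves such a tiling $\mathcal T'$. The matching rules were chosen precisely so that the $\psi$-decorations of the tiles of $\mathcal T'$ glue to a single almost equivariant $2$-chain $\psi'$ whose boundary is the $1$-cycle read off from $\alpha$ on the tiles; after composing with a simplicial automorphism we may assume this cycle is the $\Gamma$-equivariant cycle $a=\tau(\alpha)$ of the construction. Since $G$ permutes the tiles of $\mathcal T'$ together with their decorations, $g_*\psi'=\psi'$ for every $g\in G$, and hence $a=\partial\psi'$ is $G$-invariant. Next I would enlarge $G$ if necessary so that it contains a finite-index subgroup of $\Gamma$: this is where one uses that the tiles are copies of a fundamental polytope for the free cocompact $\Gamma$-action, so that $G$ (or the group generated by $G$ together with the $\Gamma$-stabilizer of the decoration) is commensurable with $\Gamma$; thus $\Lambda:=G\cap\Gamma$ has finite index in $\Gamma$.

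Now descend along the finite covering $q\colon\widehat X:=(\widetilde M\times\widetilde N)/\Lambda\to M\times N$. Both $a$ and $\psi'$ are $\Lambda$-invariant --- $a$ because it is $\Gamma$- and $G$-invariant, $\psi'$ because it is $G$-invariant --- and over $\RR$ a $\Lambda$-invariant bounded chain is finitely supported modulo $\Lambda$, so $a$ and $\psi'$ descend to finite chains $\widehat a\in C_1(\widehat X;\RR)$ and $\widehat\psi'\in C_2(\widehat X;\RR)$ with $\partial\widehat\psi'=\widehat a$; in particular $[\widehat a]=0$ in $H_1(\widehat X;\RR)$. On the other hand, $a$ is by construction the $\Gamma$-equivariant transfer of a cycle representing $\alpha$, so $[\widehat a]$ is exactly the image of $\alpha$ under the transfer homomorphism $H_1(M\times N;\RR)\to H_1(\widehat X;\RR)$ attached to the finite covering $q$, which is injective over $\RR$. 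Since $\alpha\neq 0$ this contradicts $[\widehat a]=0$. Hence no cocompact $G$ preserves $\mathcal T'$, and $\mathcal T$ is an aperiodic set of tiles of $\widetilde M\times\widetilde N$.

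The homological core --- a $G$-invariant bounding chain for $a$ would force the nonzero transferred class $\alpha$ to bound on a finite complex --- is exactly Block--Weinberger's mechanism and is routine. The main obstacle is the group-theoretic bridge in the second paragraph: rigorously passing from ``$G$ preserves $\mathcal T'$'' to ``$g_*\psi'=\psi'$ and, after an automorphism, $\partial\psi'=a$'', and, more seriously, producing inside (an enlargement of) $G$ a finite-index subgroup of $\Gamma$. In the $0$-dimensional case this step is vacuous because $\sum_v v$ is invariant under all simplicial automorphisms, whereas here $a$ depends on $\Gamma$, so one must know that a cocompact group preserving $\mathcal T'$ is commensurable with $\Gamma$. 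This can be arranged by choosing the triangulation and fundamental polytope rigidly enough that the underlying undecorated $\Gamma$-tiling has discrete (hence virtually $\Gamma$) automorphism group; carrying that out cleanly is the crux of the argument.
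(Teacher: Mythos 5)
Your core mechanism coincides with the paper's: push the $G$-invariant bounding $2$-chain coming from the tiling down to a finite cover, obtaining a bounding of the transferred class, and derive a contradiction from the injectivity of the finite transfer over $\RR$. That part is the same argument as Block--Weinberger and the same argument as in the paper.

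Where you genuinely diverge is in trying to treat an arbitrary cocompact group $G$ of simplicial automorphisms. The paper's proof quietly declares periodicity to mean ``preserved by a finite-index \emph{normal subgroup $H\subseteq\Gamma$}'' and then runs the descent argument over $(\widetilde M\times\widetilde N)/H$; it does not address why an arbitrary $G$ stabilizing the tiling would be commensurable with $\Gamma$, even though the paper's own definition of aperiodicity allows any cocompact group of simplicial automorphisms. You noticed this mismatch, explained exactly why it is nontrivial here (unlike the $0$-dimensional case, $a=\tau(\alpha)$ is only $\Gamma$-equivariant, not equivariant under all simplicial automorphisms), and attempted to bridge it via commensurability of $G$ with $\Gamma$. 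That is a real observation that the published argument does not make. However, your bridge is not proved: you assert that $G$ must be commensurable with $\Gamma$ because the tiles are copies of a fundamental polytope, but a cocompact simplicial automorphism group of $\widetilde M\times\widetilde N$ can be far larger than $\Gamma$ (already for products of trees), and nothing in the matching-rule data obviously pins $G$ down to a commensurable group. Your concluding paragraph candidly labels this the ``crux'' and leaves it open, so the proposal, read literally, has a gap. But that gap is in the same place where the paper is silent; if you replace ``arbitrary cocompact $G$'' with the paper's tacit ``finite-index subgroup of $\Gamma$'', your argument closes and matches the paper's. It would strengthen both accounts to either restate the aperiodicity claim relative to $\Gamma$, or supply a genuine rigidity lemma guaranteeing $G\cap\Gamma$ has finite index in $\Gamma$; neither is present here.
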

\begin{proof}
Consider a tiling of $\widetilde{M}\times \widetilde{N}$ by tiles from $\mathcal{T}$ and assume that it is periodic;
that this, it would be preserved by a finite index normal subgroup $H\subseteq \Gamma$. The restrictions of $\psi$ to the tiles now form a new almost equivariant chain, 
call it $\phi$,
but the matching rule guarantees that $\partial \phi=a$. Additionally, both $\phi$ and $a$ are $H$-equivariant, and thus pass down to 
the homology group~$H_1((\widetilde{M}\times\widetilde{N})/H;\RR)$, giving
$$\tau_H(\alpha)=0,$$
where $\tau_H:H_1(M\times N;\RR)\to H_1((\widetilde{M}\times\widetilde{N})/H;\RR)$ is the standard finite transfer map.
However, this is impossible, since the standard finite transfer with coefficients in $\RR$ is always an injection on homology. 
\end{proof}

The same argument gives constructions of aperiodic tiles for products of $n$ trees using $k$-dimensional homology for $k\le n-1$.
\subsection{Buildings}

 Another case, in which we believe similar vanishing should take place is
the case of affine buildings. Recall that thick affine buildings exhibit branching.
This branching allows to make some reductions of general cycles to cycles 
of specific form, similarly as in the case of products of trees. It is thus natural to state the following

\begin{conjecture}
Let $X$ be a thick affine building. Then $H_k^{uf}(X)=0$ for $k=0,\dots, \operatorname{dim} X-1$.
\end{conjecture}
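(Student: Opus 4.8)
The plan is to run the geometric Eilenberg-swindle machinery of Section~\ref{s:es} and Lemmas~\ref{l:b_d}--\ref{l:zeroxyz}, substituting building-theoretic input for the two ingredients that powered the product case: \emph{apartments together with their retractions} will play the role of the product decomposition, while \emph{thickness} — equivalently, non-amenability of the chamber graph — will supply the tails out of which panels and beams are built. Assume that the thickness of $X$ is bounded, so that $X$ is uniformly locally finite (as with the uniform degree bound imposed on trees, this restriction is inessential); since affine buildings are $\mathrm{CAT}(0)$ and hence uniformly contractible, the coarsening map (\ref{eq:comparison}) is an isomorphism, so it suffices to prove $H_k^{(\infty)}(X;R)=0$ for $k\le \operatorname{rank}X-1$ and $R=\ZZ,\RR$; this would be the building analog of Theorem~\ref{t:2homofT3}, extended to all ranks. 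The base case $k=0$ is immediate: a thick affine building of positive rank is non-amenable — its chamber graph grows exponentially because every panel lies in at least three chambers — so $[X]=0$ in $H_0^{uf}(X;R)$ by Theorem~\ref{t:BW}, and, exactly as in Section~\ref{s:es}, this non-amenability produces a system of tails $\{t_{\tau}\}$ on the vertex set with uniformly bounded multiplicity, the raw material for every higher swindle.

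For $1\le k\le n-1$, with $n=\operatorname{rank}X$ the common dimension of the apartments, I would fix once and for all a chamber $\xi$ of the spherical building $\partial_{\infty}X$ at infinity — equivalently, a model sector direction — and a retraction $\rho=\rho_{A,\xi}\colon X\to A$ onto a fixed apartment $A$ centered at $\xi$. Since $\rho$ is simplicial and type-preserving, every simplex of $X$ inherits from the wall-families of the Coxeter complex of $A$ a ``direction type'', generalizing the $\x$-, $\y$- and $\z$-simplices of a triple product of trees. The reduction then proceeds in the order of Lemmas~\ref{l:b_d}, \ref{l:b_x}, \ref{l:b_z}. First one establishes the building analog of Proposition~\ref{p:relative}, in which the role of the relation $H_i(I^k,\partial I^k)\simeq H_i(S^k)$ is played by the Solomon--Tits theorem — the rank-$r$ spherical residues of $X$ are homotopy equivalent to wedges of $(r-1)$-spheres, hence $i$-acyclic for $i<r-1$ — and uses it to reduce an arbitrary $k$-cycle to one supported on simplices of a restricted set of direction types, the analog of eliminating the diagonal chains in Lemma~\ref{l:b_d}. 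Then, one wall-direction at a time, one attaches $(k+1)$-dimensional beams that follow the tails $t_{\tau}$ in that direction, exactly as the beams of (\ref{eq:panel_x}) and (\ref{eq:panel_y}) follow a tail in a single tree factor. Thickness ensures, as non-amenability did for trees, that the resulting infinite sum of $(k+1)$-simplices has uniformly bounded coefficients and so lies in $C_{k+1}^{(\infty)}(X;R)$; its boundary trades the chosen directional component of the cycle for components in the remaining directions together with boundary panels on the faces, and the cycle condition is used to see that the genuinely new contributions cancel, as in the identities $c(\sigma_{\x})+c(\widetilde{\sigma}_{\x})=0$ of Lemma~\ref{l:b_x} and (\ref{eq:cycleconditiony}) of Lemma~\ref{l:b_z}.

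Iterating over all but one wall-direction reduces any $k$-cycle to one supported on a single direction type, ``stacked'' in one apartment direction while extending toward $\xi$; the final step is to show such a cycle bounds, by attaching one further family of beams along $\xi$ and invoking the cycle condition once more, in direct analogy with Lemma~\ref{l:zeroxyz}. Assembling the steps would give $H_k^{(\infty)}(X;R)=0$ for all $k\le n-1$, hence the conjecture via (\ref{eq:comparison}).

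The hard part — and the reason the statement is only a conjecture — is the combinatorial cancellation inside this reduction once $n>2$. For a product of trees the local configuration of simplices around a codimension-$2$ face is elementary: it falls into a short explicit list (cf. the six families in Lemma~\ref{l:b_z}), which is precisely what makes the bookkeeping in Lemmas~\ref{l:b_x}--\ref{l:zeroxyz} close, and that argument also exploits the defect $1$ between the homological degree and the number of factors. In an affine building of rank $n$ the link of a codimension-$2$ face is a generalized polygon, and links of higher-codimension faces are themselves thick spherical — hence, for the ambient affine building, lower-rank affine — sub-buildings; so the correct formulation is presumably an induction on $\operatorname{rank}X$ in which Proposition~\ref{p:relative} is upgraded to a statement about the relative fine uniformly finite homology of residues and of unions of apartments, carrying enough uniformity in the constants to keep every chain inside a fixed $\infty$-norm ball. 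Propagating this uniform control through all residue types while simultaneously tracking the directional decomposition is, I expect, the principal obstacle.
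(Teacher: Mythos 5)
This statement is posed in the paper as an open conjecture; the paper offers no proof, only the remark that thickness (``every panel lies in at least three chambers'') should permit reductions of cycles to special forms, similarly to products. Your text is in the same spirit, but it is a programme rather than a proof, and you say so yourself: the ``combinatorial cancellation inside this reduction once $n>2$'' is exactly the content of the conjecture, and leaving it open means nothing has been established beyond the degree-$0$ case, which is already Theorem~\ref{t:BW}.

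The concrete gaps are the following. First, every step of the paper's machinery is powered by the product structure, not merely by non-amenability: the cube decomposition behind Proposition~\ref{p:relative}, the classification of edges and $2$-simplices by factor directions, and above all the panels and beams of (\ref{eq:panel_x}) and (\ref{eq:panel_y}), which are literally a tail in one factor ``times'' a simplex in the remaining factors, so that their boundaries are computable in closed form. An apartment of a thick affine building (say of type $\tilde A_2$) is a Coxeter complex with no product splitting, and the retraction $\rho_{A,\xi}$ does not create one; a ``direction type'' read off from parallelism classes of walls does not tell you how to multiply a tail against a $k$-simplex to get a simplicial $(k+1)$-chain in $X$, so your beams are not actually defined. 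Second, the cancellations that close Lemmas~\ref{l:b_x}--\ref{l:zeroxyz} rest on the fact that in a product of trees a codimension-one face is shared by a very short, explicitly listed set of simplices, producing the two-term identities $c(\sigma)+c(\widetilde\sigma)=0$; thickness destroys precisely this, since a panel of the building lies in at least three chambers, and the cycle condition then only says that a sum of three or more coefficients vanishes, which does not force the corresponding sum of attached panels to vanish termwise. Third, invoking Solomon--Tits in place of $H_i(I^k,\partial I^k)\simeq H_i(S^k)$ requires an analogue of Proposition~\ref{p:relative} with uniform control of the $\infty$-norms of the chosen fillings over all residues, and you give no argument for this. These are the points where a proof would have to do its work, so the proposal, while a sensible research plan consistent with the paper's closing remarks, does not prove the conjecture.
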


\begin{bibdiv}
\begin{biblist}[\normalsize]
\normalfont

\bib{blank-diana}{article}{
   author={Blank, M.},
   author={Diana, F.},
   title={Uniformly finite homology and amenable groups},
   journal={Algebr. Geom. Topol.},
   volume={15},
   date={2015},
   number={1},
   pages={467--492},
}

\bib{bw-jams}{article}{
   author={Block, J.},
   author={Weinberger, S.},
   title={Aperiodic tilings, positive scalar curvature and amenability of
   spaces},
   journal={J. Amer. Math. Soc.},
   volume={5},
   date={1992},
   number={4},
   pages={907--918}, 
}

\bib{bw-survey}{article}{
   author={Block, J.},
   author={Weinberger, S.},
   title={Large scale homology theories and geometry},
   conference={
      title={Geometric topology},
      address={Athens, GA},
      date={1993},
   },
   book={
      series={AMS/IP Stud. Adv. Math.},
      volume={2},
      publisher={Amer. Math. Soc.},
      place={Providence, RI},
   },
   date={1997},
   pages={522--569},
}

\bib{burger-mozes}{article}{
   author={Burger, M.},
   author={Mozes, S.},
   title={Lattices in product of trees},
   journal={Inst. Hautes \'Etudes Sci. Publ. Math.},
   number={92},
   date={2000},
   pages={151--194 (2001)},
}

\bib{dranishnikov}{article}{
   author={Dranishnikov, A. N.},
   title={On macroscopic dimension of rationally essential manifolds},
   journal={Geom. Topol.},
   volume={15},
   date={2011},
   number={2},
   pages={1107--1124},
}

\bib{dfw}{article}{
   author={Dranishnikov, A. N.},
   author={Ferry, Steven C.},
   author={Weinberger, Shmuel},
   title={Large Riemannian manifolds which are flexible},
   journal={Ann. of Math. (2)},
   volume={157},
   date={2003},
   number={3},
   pages={919--938},
   issn={0003-486X},
}

\bib{dymarz}{article}{
   author={Dymarz, T.},
   title={Bilipschitz equivalence is not equivalent to quasi-isometric
   equivalence for finitely generated groups},
   journal={Duke Math. J.},
   volume={154},
   date={2010},
   number={3},
   pages={509--526},
}

\bib{eilenberg}{book}{
    AUTHOR = {Eilenberg, S.},
    Author={Steenrod, N.},
     TITLE = {Foundations of algebraic topology},
 PUBLISHER = {Princeton University Press},
   ADDRESS = {Princeton, New Jersey},
      YEAR = {1952},
     PAGES = {xv+328},
   MRCLASS = {56.0X},
  MRNUMBER = {0050886 (14,398b)},
MRREVIEWER = {H. Cartan},
}

\bib{geo}{book}{
   author={Geoghegan, R.},
   title={Topological methods in group theory},
   series={Graduate Texts in Mathematics},
   volume={243},
   publisher={Springer, New York},
   date={2008},
   pages={xiv+473},
}

\bib{gong-yu}{article}{
   author={Gong, G.},
   author={Yu, G.},
   title={Volume growth and positive scalar curvature},
   journal={Geom. Funct. Anal.},
   volume={10},
   date={2000},
   number={4},
   pages={821--828},
}

\bib{gromov}{article}{
   author={Gromov, M.},
   title={Large Riemannian manifolds},
   conference={
      title={Curvature and topology of Riemannian manifolds},
      address={Katata},
      date={1985},
   },
   book={
      series={Lecture Notes in Math.},
      volume={1201},
      publisher={Springer, Berlin},
   },
   date={1986},
   pages={108--121},
 }

\bib{hair}{book}{
   author={Hair, S.},
     title= {Homological methods in coarse geometry},
      note= {Thesis (Ph.D.)--The Pennsylvania State University},
 publisher= {ProQuest LLC, Ann Arbor, MI},
      date= {2010},
}

\bib{hanke-etal}{article}{
   author={Hanke, B.},
   author={Kotschick, D.},
   author={Roe, J.},
   author={Schick, T.},
   title={Coarse topology, enlargeability, and essentialness},
   journal={Ann. Sci. \'Ec. Norm. Sup\'er. (4)},
   volume={41},
   date={2008},
   number={3},
   pages={471--493},
}

\bib{higson-roe}{article}{
   author={Higson, N.},
   author={Roe, J.},
   title={On the coarse Baum-Connes conjecture},
   conference={
      title={Novikov conjectures, index theorems and rigidity, Vol.\ 2 },
      address={Oberwolfach},
      date={1993},
   },
   book={
      series={London Math. Soc. Lecture Note Ser.},
      volume={227},
      publisher={Cambridge Univ. Press, Cambridge},
   },
   date={1995},
   pages={227--254},
}

\bib{hughes-ranicki}{book}{
   author={Hughes, B.},
   author={Ranicki, A.},
   title={Ends of complexes},
   series={Cambridge Tracts in Mathematics},
   volume={123},
   publisher={Cambridge University Press, Cambridge},
   date={1996},
   pages={xxvi+353},
}

\bib{marcinkowski-nowak}{article}{
   author={Marcinkowski, M.},
   author={Nowak, P.},
   title={Aperiodic tilings of manifolds of intermediate growth},
   journal={Groups Geom. Dyn.},
   volume={8},
   date={2014},
   number={2},
   pages={479--483},
}

\bib{nowak-spakula}{article}{
   author={Nowak, P. W.},
   author={{\v{S}}pakula, J.},
   title={Controlled coarse homology and isoperimetric inequalities},
   journal={J. Topol.},
   volume={3},
   date={2010},
   number={2},
   pages={443--462},
   }

\bib{nowak-yu}{book}{
   author={Nowak, P. W.},
   author={Yu, G.},
   title={Large scale geometry},
   series={EMS Textbooks in Mathematics},
   publisher={European Mathematical Society (EMS), Z\"urich},
   date={2012},
   pages={xiv+189},
}

\bib{roe-memoir}{article}{
   author={Roe, J.},
   title={Coarse cohomology and index theory on complete Riemannian
   manifolds},
   journal={Mem. Amer. Math. Soc.},
   volume={104},
   date={1993},
   number={497},
   pages={x+90},
}

\bib{staircase}{article}{
    AUTHOR = {Santos, F.},
     TITLE = {A point set whose space of triangulations is disconnected},
   JOURNAL = {J. Amer. Math. Soc.},
  FJOURNAL = {Journal of the American Mathematical Society},
    VOLUME = {13},
      YEAR = {2000},
    NUMBER = {3},
     PAGES = {611--637},
}

\bib{whyte}{article}{
   author={Whyte, K.},
   title={Amenability, bi-Lipschitz equivalence, and the von Neumann
   conjecture},
   journal={Duke Math. J.},
   volume={99},
   date={1999},
   number={1},
   pages={93--112},
}

\end{biblist}
\end{bibdiv}

\end{document}